\documentclass[sn-mathphys,Numbered]{sn-jnl}% Math and Physical Sciences Reference Style
\usepackage{graphicx}%
\usepackage{multirow}%
\usepackage{amsmath,amssymb,amsfonts}%
\usepackage{amsthm}%
\usepackage{mathrsfs}%
\usepackage[title]{appendix}%
\usepackage{xcolor}%
\usepackage{textcomp}%
\usepackage{manyfoot}%
\usepackage{booktabs}%
\usepackage{algorithm}%
\usepackage{algorithmicx}%
\usepackage{algpseudocode}%
\usepackage{listings}
\usepackage{multicol}%
\theoremstyle{thmstyleone}%
\newtheorem{theorem}{Theorem}%  meant for continuous numbers
\newtheorem{proposition}[theorem]{Proposition}%

\theoremstyle{thmstyletwo}%

\theoremstyle{thmstylethree}%
\newtheorem{definition}{Definition}%

\begin{document}

\title[S.D]{Geometric Dissipation Structures and the Koszul Form}

%%=============================================================%%
%% Prefix   -> \pfx{Dr}
%% GivenName    -> \fnm{Joergen W.}
%% Particle -> \spfx{van der} -> surname prefix
%% FamilyName   -> \sur{Ploeg}
%% Suffix   -> \sfx{IV}
%% NatureName   -> \tanm{Poet Laureate} -> Title after name
%% Degrees  -> \dgr{MSc, PhD}
%% \author*[1,2]{\pfx{Dr} \fnm{Joergen W.} \spfx{van der} \sur{Ploeg} \sfx{IV} \tanm{Poet Laureate}
%%                 \dgr{MSc, PhD}}\email{iauthor@gmail.com}
%%=============================================================%%

\author*[1]{\fnm{} \sur{Prosper Rosaire Mama  Assandje }}\email{mamarosaire@facsciences-uy1.cm}
\author[2]{\fnm{} \sur{Fr\'{e}d\'{e}ric Barbaresco }}\email{frederic.barbaresco@thalesgroup.com}
\equalcont{These authors contributed equally to this work.}
\author[1]{\fnm{} \sur{ Romain Nimpa Pefoukeu}}\email{romain.nimpa@facsciences-uy1.cm}\equalcont{These authors
contributed equally to this work.}
\author[1]{\fnm{} \sur{Michel Bertrand Djiadeu Ngaha }}\email{michel.djiadeu@facsciences-uy1.cm}\equalcont{These authors
contributed equally to this work.}
\author[1]{\fnm{} \sur{Nozah Mba Frankel }}\email{nozahmba@gmail.com}
\equalcont{These authors contributed equally to this work.}

\affil*[1]{\orgdiv{Department of Mathematics}, \orgname{University
of Yaounde 1}, \orgaddress{\street{usrectorat@.univ-yaounde1.cm},
\city{yaounde}, \postcode{812}, \state{Center}, \country{Cameroon}}}
 \affil[2]{\orgdiv{Department of Mathematics}, \orgname{Thales Land \& Air Systems(
Industry ) }, \orgaddress{\street{https://www.thalesgroup.com/en},
\city{2 Avenue Gay Lussac}, \postcode{ CS90502 78990 Elancourt},
\state{Region 8 (Africa, Europe, Middle East)}, \country{France}}}

%\affil[4]{\orgdiv{Department of Mathematics }, \orgname{University
%of Pala}, \orgaddress{\street{mopengpeter@gmail.com}, \city{Pala},
%\postcode{20}, \state{Far -North}, \country{Tchad}}}
%%==================================%%
%% sample for unstructured abstract %%
%%==================================%%

\abstract{This paper investigates the canonical Koszul one-form at
the interface of Jean Marie Souriau's Lie group thermodynamics and
Paulette Libermann's symplectically complete foliations to describe
geometric dissipation. On the Poincar\'{e} half-plane $\mathbb{H}$
under $SL(2, \mathbb{R})$, the equivariant momentum map level sets
form a regular Libermann foliation framing the conservative
boundaries. The Koszul-derived dissipative vector field acts
strictly transverse to these leaves, driving a metriplectic flow
that breaks Noether conservation laws and generates entropy. Quantum
wise, we develop Berezin quantization on para-K\"{a}hler symmetric
spaces, where the Koszul potential acts as a weight determining
state density via the Fisher Souriau metric, and formalize Kostant's
geometric quantization using the real dual polarizations of a
bi-Lagrangian web.}

\keywords{}

%%\pacs[JEL Classification]{D8, H51}

\pacs[MSC Classification]{53D20, 53D50, 53C12, 82C05, 53B12}

\maketitle
\section{Introduction}\label{sec1}

The geometrization of classical mechanics and thermodynamics
constitutes one of the major pillars of modern mathematical physics.
Initiated by the pioneering work of Jean Marie Souriau
\cite{souriau1970}, Lie group thermodynamics provides a rigorous
mathematical framework where statistical equilibrium states are
interpreted as probability measures specifically Gibbs ensembles
supported on the coadjoint orbits of a Lie group $G$. Within this
variational formalism, the macroscopic properties of a physical
system are intrinsically encoded by the natural
Kirillov-Kostant-Souriau (KKS) symplectic geometry carried by these
orbits, establishing a deep rooted connection between algebraic
symmetries and macroscopic thermal states \cite{marle2020,
desaxce2026}. The study of these orbits carrying Gibbs ensembles has
recently culminated in explicit classification theorems, confirming
the profound impact of such configurations on modern information
geometry and strongly Hamiltonian actions \cite{neeb2026}. However,
the geometric description of non-equilibrium phenomena and
irreversible dissipation structures requires the introduction of
transverse topological and differential machinery capable of
reaching beyond the strictly conservative boundaries of traditional
symplectic systems. Recent research has focused heavily on modeling
irreversible dynamics directly on Poisson manifolds by leveraging
structural deformation theories \cite{luesink2026} rooted in the
local foundations of Alan Weinstein \cite{Weinstein1983} and
Andr\'{e} Lichnerowicz \cite{Lichnerowicz1977}, or by extending
entropy functionals to infinite dimensional Hamiltonian frameworks
\cite{magnot2026}. In parallel, the symplectic foliation formulation
applied to Sadi Carnot's foundational thermodynamics
\cite{barbaresco2025} has shed light on the structural necessity of
a dual geometric infrastructure designed to isolate reversible flows
from orthogonal dissipative forces. In \cite{mama2026}, it is study
the generalized Fisher metric on the Lie groups {SO(2)} and {SO(3)}
via the Souriau thermodynamics Lie group theories. Then we give the
effect of two-cocycle on the integrability of gradient systems due
to the Fisher metric and the Souriau Fisher metric. In addition, it
show how the cocycle can locally modify the Fisher metric on a
coadjoint orbit, in explicit terms of brackets and central
extensions on the Lie groups {SO(2)} and {SO(3)}. The present work
positions itself at the precise confluence of these mathematical
disciplines. Specifically, we investigate the following core
problem: What role does the Koszul one-form play between the
symplectic foliation structures transversal to topics of Jean-Marie
Souriau's Lie group thermodynamics and the bi-Lagrangian or
two-Lagrangian web structures and Libermann foliations (K\"{u}nneth
structure) to describe geometric dissipation structures? By
replacing the classical unit disk framework with the intrinsic
geometry of the Poincar\'{e} half-plane $\mathbb{H}$ under the
strongly Hamiltonian action of $SL(2, \mathbb{R})$, we show that the
level sets of the equivariant momentum map $J(z,\bar{z})$ define a
regular Libermann foliation representing the conservative boundaries
of the system. The underlying classical structure builds upon
information geometry as originally developed by Amari \cite{Shu} and
revisited through the elementary frameworks of Jean Louis Koszul
\cite{barbaresco}. The canonical Koszul one-form and its associated
Hessian potential act as fundamental thermodynamic mediators across
phase space \cite{souriau, barbaresco1}. In the presence of a
bi-foliation or a two-Lagrangian web structure, the tangent bundle
splits naturally into complementary subbundles. This geometric
reduction closely mirrors the regular and singular foliation
mechanics and equivalence problems studied historically by Haefliger
\cite{Haefliger1958}, Fedida \cite{Fedida1971}, and Paulette
Libermann \cite{Libermann1983-1, Libermann1983-2,
LibermannMarle1986}, as well as the overarching global variables and
topology of integrable Hamiltonian systems with commuting integrals
analyzed by Duistermaat \cite{Duistermaat1980}. It also connects
profoundly with the convexity properties of the momentum mapping
established by Atiyah \cite{Atiyah1982}, Guillemin and Sternberg
\cite{GuilleminSternberg1982, GuilleminSternberg1984}, and Kirwan
\cite{Kirwan1984}, alongside Eliasson's local normal forms
\cite{Eliasson1984} and the general action-angle framework of Dazord
and Delzant \cite{DazordDelzant1987} or Dufour and Molino
\cite{DufourMolino1987}. The integrability of these distributions,
firmly anchored in the mathematical work on singular foliations and
field orbits by Stefan \cite{Stefan1974} and Sussmann
\cite{Sussmann1973}, provides the ideal geometric substrate where
the non-equilibrium Koszul flow drives the system transversally and
orthogonally across the conservative level sets of the momentum map.
To bridge the gap with the microscopic realm, this manuscript also
addresses the explicit extension of this geometric architecture to
the quantum domain. First, we investigate Berezin quantization on
the para-K\"{a}hler symmetric spaces of a semi simple Lie group,
where the para-complex signature induces a split geometry that
perfectly aligns with the K\"{u}nneth decomposition of Libermann
foliations. Second, we analyze Kostant quantization on a general
symplectic manifold endowed with a bi-Lagrangian web. We establish
that the duality of the Lagrangian leaves provides a natural pair of
real complementary polarizations. These polarizations allow the
definition and coupling of quantum state spaces through a modified
prequantum connection and a Blattner-Kostant-Sternberg (BKS)
transition kernel explicitly scaled by the path of thermodynamic
Koszul dissipation. The outline of this paper follows this geometric
progression. Section \ref{sec:start} establishes the algebraic
prelimarinies and fundamental notions regarding coadjoint orbits.
Section \ref{sec3} details the structural mapping of the momentum
map, the Koszul potential, the stable open dual cone $\Omega^*$, and
the Fisher Souriau information metric tensor. Section \ref{sec4}
formulates the metriplectic flow kinetics driving non-equilibrium
trajectories. Section \ref{sec5} formalises our main structural
theorem regarding the Stefan-Libermann foliation, its exact point
like sheet geometry $J^{-1}(\xi)$, and its strict transversality
against the dissipative dynamics. Finally, Sections \ref{sec6} and
\ref{sec7} develop the respective quantum extensions under the
constraints of Berezin and Kostant bi-Lagrangian quantization,
followed by a synthetic conclusion in Section \ref{sec8}.

\section{Preliminaries\label{sec:start}}

\subsection{Lie Algebra\label{ssec:math}}
We can define Lie algebras over any field. We restrict ourselves to
the real or complex case. So let $\mathbb{K}=\mathbb{R}$ or
$\mathbb{C}$.
\begin{definition}\textup{\cite{Lie}}\label{def:1}
A Lie algebra  $\mathfrak{a}$ over $\mathbb{K}$ is a finite- or
infinite dimensional $\mathbb{K}$-vector space with a
$\mathbb{K}$-bilinear, antisymmetric operation $[ , ]$ satisfying
the Jacobi identity
\begin{equation}\label{c}
    X, Y, Z\in\mathfrak{a},\quad  [X,[Y,Z]] + [Y,[Z, X]] + [Z,[X, Y]] =
    0.
\end{equation}
 The bilinear operation $[ , ]$ is called the Lie bracket (or
simply bracket). The center of a Lie algebra $\mathfrak{a}$ is the
abelian ideal of $\mathfrak{a}$ \begin{equation}\label{c}
     \left\{ X \in \mathfrak{a} \, ; \, Y \in \mathfrak{a},  \quad [X,Y] = 0
     \right\}.
\end{equation}
 A Lie algebra is called simple if
it has no nontrivial ideals and is not of dimension $0$ or $1$. It
is called semi simple if it has no nonzero abelian ideals.
\end{definition}

\begin{definition}\textup{\cite{Lie}}\label{def:2}
A Lie subalgebra of a Lie algebra is a vector subspace closed under
the brackets\end{definition}

\begin{proposition}\textup{\cite{Lie}}\label{prop:1}
The Lie algebras of
 {$\mathrm{SL}(n,\mathbb{K}),\;\mathrm{O}(n),\;\mathrm{SO}(n)$}
are  {\begin{eqnarray*}
% \nonumber to remove numbering (before each equation)
  \mathfrak{sl}(n,\mathbb{K}) &=& \left\{X\in
  \mathfrak{gl}(n,\mathbb{K})\, ; \,\mathrm{Tr}X=0\right\}\end{eqnarray*}}
  is the Lie algebra of traceless $n\times n$ matrices with coefficients
  $n \times n\in\mathbb{K}$, and $ {\mathrm{dim}}_{\mathbb{R}}\mathfrak{sl}(n,\mathbb{R})
  =n^{2}-1,\;  {\mathrm{dim}}_{\mathbb{C}}\mathfrak{sl}(n,\mathbb{C})=n^{2}-1$
  , $ {\mathrm{dim}}_{\mathbb{R}}\mathfrak{sl}(n,\mathbb{C})=2(n^{2}-1)$.
  \begin{eqnarray*}
  \mathfrak{o}(n)&=&\mathfrak{so}(n)=\left\{X\in \mathfrak{gl}(n,\mathbb{R})
  \quad X+^{t}X=0\right\}
\end{eqnarray*}
 is the Lie algebra of antisymmetric real $n\times n$ matrices
and $ {\mathrm{dim}} \mathfrak{so}(n)=n(n-1)/2$.
\end{proposition}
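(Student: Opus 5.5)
The approach is to identify each Lie algebra as the tangent space at the identity of the corresponding matrix group, computed by differentiating the defining equation along smooth curves through $I$. The Lie algebra of a closed subgroup $G\subset GL(n,\mathbb{K})$ is $\mathfrak{g}=\{X\in\mathfrak{gl}(n,\mathbb{K})\,;\,\exp(tX)\in G \text{ for all } t\in\mathbb{R}\}$, with bracket the matrix commutator. I will prove the inclusion ``$\subseteq$'' by differentiating at $t=0$ and the inclusion ``$\supseteq$'' by checking directly that $\exp(tX)$ satisfies the defining relation. Closure of the resulting subspaces under the commutator then makes them Lie subalgebras in the sense of Definition~\ref{def:2}, and the Jacobi identity of Definition~\ref{def:1} is inherited from $\mathfrak{gl}(n,\mathbb{K})$.

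For $\mathrm{SL}(n,\mathbb{K})$ the key tool is Jacobi's formula $\det(\exp(tX))=e^{t\,\mathrm{Tr}X}$. I will justify it by triangularizing $X$ over $\mathbb{C}$, or by showing that $f(t)=\det\exp(tX)$ satisfies $f'(t)=\mathrm{Tr}(X)f(t)$ with $f(0)=1$. From this identity, $\det\exp(tX)=1$ for all $t$ holds if and only if $\mathrm{Tr}X=0$. In the complex case one must avoid concluding only that $t\,\mathrm{Tr}X\in 2\pi i\mathbb{Z}$; this is handled by letting $t$ vary continuously, or by differentiating at $t=0$. The commutator of any two matrices is traceless, because $\mathrm{Tr}(XY)=\mathrm{Tr}(YX)$, so $\mathfrak{sl}(n,\mathbb{K})$ is closed under the bracket.

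For $\mathrm{O}(n)$ I will differentiate ${}^{t}\!\exp(tX)\exp(tX)=I$ at $t=0$, which gives ${}^{t}X+X=0$. Conversely, if ${}^{t}X=-X$, then ${}^{t}\!\exp(tX)=\exp(-tX)=\exp(tX)^{-1}$. To get $\mathfrak{o}(n)=\mathfrak{so}(n)$, I will note that $\mathrm{SO}(n)$ is the identity component of $\mathrm{O}(n)$: $\det$ is continuous with values in $\{\pm1\}$, so every curve through $I$ stays in $\mathrm{SO}(n)$. Alternatively, antisymmetric $X$ has $\mathrm{Tr}X=0$, so $\det\exp(tX)=1$ by Jacobi's formula.

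The dimension counts come from exhibiting bases. For $\mathfrak{sl}(n,\mathbb{K})$, take the $E_{ij}$ with $i\neq j$ together with $E_{ii}-E_{nn}$ for $i<n$, giving $n^2-1$ elements over $\mathbb{K}$. Equivalently, $\mathrm{Tr}$ is a surjective linear form on the $n^2$-dimensional space $\mathfrak{gl}(n,\mathbb{K})$. Realifying doubles the count, so $\dim_{\mathbb{R}}\mathfrak{sl}(n,\mathbb{C})=2(n^2-1)$. For $\mathfrak{so}(n)$, an antisymmetric matrix is determined by its strictly upper-triangular entries, with basis $E_{ij}-E_{ji}$ for $i<j$, giving $n(n-1)/2$.

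The main obstacle is rigor in Jacobi's formula and in the exponential characterization of the Lie algebra of a closed subgroup. I will treat the latter as standard from \cite{Lie}. For the former I will use the ODE argument: $\frac{d}{dt}\det\exp(tX)=\det\exp(tX)\cdot\frac{d}{ds}\big|_{s=0}\det(I+sX)=\mathrm{Tr}(X)\det\exp(tX)$. This uses only the expansion $\det(I+sX)=1+s\,\mathrm{Tr}X+O(s^2)$.
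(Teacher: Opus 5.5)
The paper does not prove this proposition. It quotes it from the textbook \cite{Lie} as background, so there is no argument of the authors to compare yours against. Your proposal is the standard textbook proof, and it is correct.

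\begin{itemize}
\item The exponential characterization of the Lie algebra of a closed subgroup handles both inclusions cleanly.
\item Your ODE derivation of Jacobi's formula $\det\exp(tX)=e^{t\,\mathrm{Tr}X}$ is sound. It uses multiplicativity $f(t+s)=f(t)f(s)$, and the first-order replacement of $\exp(sX)$ by $I+sX$ is legitimate because $\det$ is polynomial.
\item Differentiating at $t=0$ correctly disposes of the $2\pi i\mathbb{Z}$ ambiguity when $\mathbb{K}=\mathbb{C}$.
\item The identity ${}^{t}\exp(tX)=\exp(t\,{}^{t}X)$ gives $\mathfrak{o}(n)$ in both directions.
\item The explicit bases give the three dimension counts.
\end{itemize}

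Two small presentational points.

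First, you announce that closure under the commutator makes these subspaces Lie subalgebras, but you verify it only for $\mathfrak{sl}(n,\mathbb{K})$. For antisymmetric $X,Y$ the one-line check is ${}^{t}[X,Y]=[{}^{t}Y,{}^{t}X]=[-Y,-X]=-[X,Y]$. This also follows from the general closed-subgroup theory you invoke.

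Second, your determinant argument shows that $\mathrm{SO}(n)=\det^{-1}(1)$ is open and closed in $\mathrm{O}(n)$. That is all you need, since $t\mapsto\exp(tX)$ is a connected curve through $I$. Calling $\mathrm{SO}(n)$ the identity component additionally requires that $\mathrm{SO}(n)$ is connected, which is true but unnecessary here. Your alternative route, $\mathrm{Tr}X=0$ for antisymmetric $X$ together with Jacobi's formula, avoids the issue entirely and is the cleaner choice.
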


\subsection{Coadjoint Operator and Coadjoint Orbits (Kirillov
Representation)\label{ssec:math1}}
\begin{definition}\textup{\cite{souriau,souriau1}}\label{def:3}
The adjoint representation of a Lie group $ {\mathrm{Ad}}_{g}$ is a
way of representing its elements as linear transformations of the
Lie algebra, considered as a vector space
  {\begin{eqnarray*}
% \nonumber to remove numbering (before each equation)
  \varphi: G&\longrightarrow & Aut(G),\quad g \longmapsto \varphi_{g}(h)=ghg^{-1}
\end{eqnarray*}
\begin{eqnarray*}
  {\mathrm{Ad}}_{g}=\varphi_{g}(h): \mathfrak{g}&\longrightarrow & \mathfrak{g},\quad X\longmapsto  {\mathrm{Ad}}_{g}(X)=gXg^{-1}
\end{eqnarray*}}
\begin{eqnarray*}
% \nonumber to remove numbering (before each equation)
  { \mathrm{ad}}_{g}=T_{e} {\mathrm{Ad}}: T_{e}(G)&\longrightarrow & \mathrm{End}\left(T_{e}(G)\right)\\
  X,Y&\longmapsto&  {\mathrm{ad}}_{X}(Y)=\left[X,Y\right].
\end{eqnarray*}
the coadjoint representation of a Lie group $ {
\mathrm{Ad}}^{*}_{g}$, is the dual of the adjoint representation
(denotes the dual space to $\mathfrak{g}$ )
\begin{equation*}
     g\in G, Y\in \mathfrak{g}, F\in \mathfrak{g}^{*},\quad
    \textrm{then}\quad \langle  {\mathrm{Ad}}^{*}_{g}F,Y\rangle=\langle F, {\mathrm{Ad}}_{g^{-1}}Y\rangle
\end{equation*}
A coadjoint orbit
\begin{eqnarray*}
    O_{F}&=&\left\{ {\mathrm{Ad}}^{*}_{g}F=g^{-1}F g,\;g\in G,\;F\in \mathfrak{g}^{*}\right\}
,\; K= {\mathrm{Ad}}^{*}_{g}=\left(
{\mathrm{Ad}}_{g^{-1}}\right)^{*}, and \; K_{*}X=-\left(
{\mathrm{Ad}}_{X}\right)^{*}
\end{eqnarray*}
carry a natural homogeneous symplectic structure by a closed
$G$-invariant two-form \begin{equation*}
    \sigma_{\Omega}\left(K_{*X}F,K_{*Y}F\right)=B_{F}\left(X,Y\right)=\langle F,\left[X,Y\right]\rangle,\qquad X,Y\in
    \mathfrak{g}.
\end{equation*} The coadjoint action on is a Hamiltonian
$G$-action with moment map given by
$\Omega\longrightarrow\mathfrak{g}^{*}$.
\end{definition}

\begin{definition}\textup{\cite{souriau,souriau1}}\label{def:4}
The tensor $\tilde{\Theta}$ is  defined in tangent space of the
cocycle $\theta(g)\in\mathfrak{g}^{*}$(this cocycle appears due to
the non-equivariance of the coadjoint operator
$\mathrm{Ad}^{*}_{g}$, action of the group on the dual lie algebra):
 $\eta\left( {\mathrm{Ad}}_{g}\left(\beta\right)\right)= {\mathrm{Ad}}^{*}_{g}\left(\eta(\beta)\right)+\theta(g)$
 {\begin{eqnarray*}
% \nonumber to remove numbering (before each equation)
  \tilde{\Theta}:\mathfrak{g}\times \mathfrak{g}&\longrightarrow & \mathbb{R},\quad X,Y\longmapsto \langle\Theta(X),Y\rangle.
\end{eqnarray*}}
With $\Theta(X)=T_{e}\theta(X(e))$. According to Souriau
\textup{\cite{souriau,souriau1}}, the generalized information metric
is given by $I(\beta)=\left[g_{\beta}\right]$ where
\begin{equation*}
g_{\beta}\left(\left[\beta,Z_{1}\right],\left[\beta,Z_{2}\right]\right)=\tilde{\Theta}_{\beta}\left(Z_{1},\left[\beta,Z_{2}\right]\right).
\end{equation*}
With
$\tilde{\Theta}_{\beta}\left(Z_{1},Z_{2}\right)=\tilde{\Theta}\left(Z_{1},Z_{2}\right)+\langle
\eta, {\mathrm{ad}}_{Z_{1}}(Z_{2})\rangle$.
\end{definition}

\subsection{Foliations\label{ssec:math2}}
\begin{definition}\textup{\cite{feuille}}\label{def:5}
A foliation of dimension  $n$  of a  differentiable  manifold
$M^{m}$ is, roughly speaking,  a  decomposition  of $M$ into
connected sub-manifolds  of dimension  $n$  called leaves, which
locally stack up like  the  subsets  of
$\mathbb{R}^{m}=\mathbb{R}^{n}\times \mathbb{R}^{m-n}$ with the
second coordinate. The diffeomorphisms
\begin{equation*}
h:  U  \subset  \mathbb{R}^{m} \longrightarrow V \subset
\mathbb{R}^{m} \end{equation*}
  which preserve the leaves of this
foliation locally have the following form constant.
\begin{equation}\label{00}
h(x,y)=(h_{1}(x,y),h_{2}(y)),\quad (x,y)\in \mathbb{R}^{n}\times
\mathbb{R}^{m-n}.
\end{equation}\end{definition}

\begin{definition}\textup{\cite{feuille}}\label{def:6}
Let $M$ be  a  $C^{\infty}$  manifold of dimension  $m$.  A $C^{r}$
foliation of dimension  $n$  of $M$ is  a  $C^{r}$ atlas  a  on  $M$
which is maximal with  the  following properties:
\begin{enumerate}
    \item If  $(  U, \varphi )\in\mathcal{A}$   then  $ \varphi(U)  = U_{1} \times  U_{2}\subset
      \mathbb{R}^{n}\times \mathbb{R}^{m-n}$ where  $U_{1}$ and  $U_{2}$ are
open disks in $\mathbb{R}^{n}$ and  $\mathbb{R}^{m-n}$ respectively.
    \item If  $(  U, \varphi )$ and $(  V, \psi )\in\mathcal{A}$   are  such that  $U\cap V\neq \emptyset$
    then  the change of coordinates map
      $\psi\circ \varphi^{-1}
: \varphi(U\cap V)\longrightarrow \psi(U\cap V)$ is  of the  form
(\ref{00}), that is,$\psi\circ \varphi^{-1}(x,y)= (  h_{1}
(x,y),h_{2}(y))$. We say that $M$ is foliated by a  , or that a is a
foliated structure of dimension $n$ and  class  $C^{r}$ on $M$
\end{enumerate}

\end{definition}
\begin{definition}\textup{\cite{feuille}}\label{def:7}
A  $C^{\infty}$  action of a Lie group  $G$  on a manifold $M$ is a
map $ \psi_{1}:  G \times M \rightarrow M$ such that $\psi_{1}
\left(e,x\right) = x$ and $\psi_{1}\left( g_{1} g_{ 2} ,x\right) =
\psi_{1}\left(g_{1} ,\psi_{1} \left( g_{2}  ,x\right)\right)$ for
any $g_{1},g_{2}\in G$ and $x \in M$. The orbit of a point $x \in M$
for the action  $ \psi_{1}$ is the subset
$\mathcal{O}_{x}(\psi_{1})=\left\{\psi_{1} \left(g,x\right)\in
M|g\in G\right\}$. The isotropy group of $x \in M$ is the subgroup
$\mathcal{G}_{x}(\psi_{1})=\left\{g\in G|\psi_{1}
\left(g,x\right)=x\right\}$.
\end{definition}

\begin{proposition}\textup{\cite{feuille}}\label{prop:2}
The  orbits of a foliated action define  the  leaves of a
foliation.\end{proposition}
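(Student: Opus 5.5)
We need to show that the orbits of a foliated action are the leaves of a foliation in the sense of Definition \ref{def:6}. The term ``foliated action'' is not defined explicitly in the paper. Following the standard usage of \cite{feuille}, I will take it to mean a $C^{\infty}$ action $\psi_{1}$ as in Definition \ref{def:7} whose orbits all have the same dimension $n$. Equivalently, the isotropy groups $\mathcal{G}_{x}(\psi_{1})$ all have the same dimension $\dim G - n$.

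The plan is to pass to the infinitesimal picture. For each $X\in\mathfrak{g}$, let $X_{M}(x)=\frac{d}{dt}\big|_{t=0}\psi_{1}(\exp tX,x)$ be the fundamental vector field. I will work with the distribution $D_{x}=\{X_{M}(x)\,;\,X\in\mathfrak{g}\}=T_{x}\mathcal{O}_{x}(\psi_{1})$.

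\textbf{Step 1: $D$ is a smooth distribution of constant rank $n$.} The orbit map $g\mapsto\psi_{1}(g,x)$ has constant rank, since it is equivariant. Its image $\mathcal{O}_{x}(\psi_{1})\cong G/\mathcal{G}_{x}(\psi_{1})$ is an injectively immersed submanifold with tangent space $D_{x}$, so $\dim D_{x}=n$ for every $x$. For smoothness, choose $X_{1},\dots,X_{n}\in\mathfrak{g}$ with $(X_{i})_{M}(x_{0})$ linearly independent. These fields remain independent on a neighbourhood of $x_{0}$ by lower semicontinuity of rank. Because the rank is exactly $n$, they span $D$ there, so they give a local smooth frame.

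\textbf{Step 2: $D$ is involutive.} The map $X\mapsto X_{M}$ is a Lie algebra anti-homomorphism, $[X_{M},Y_{M}]=-[X,Y]_{M}$, with the sign depending on the convention for left actions. This follows from the Jacobi identity \eqref{c} and differentiating $\psi_{1}(g_{1}g_{2},x)=\psi_{1}(g_{1},\psi_{1}(g_{2},x))$. Hence brackets of sections of $D$ stay in $D$.

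\textbf{Step 3: apply Frobenius.} The Frobenius theorem gives charts $(U,\varphi)$ with $\varphi(U)=U_{1}\times U_{2}\subset\mathbb{R}^{n}\times\mathbb{R}^{m-n}$ whose plaques $\{y=\text{const}\}$ are integral manifolds of $D$. On overlaps, transition maps send plaques to plaques, so they have the form \eqref{00}. This yields the atlas of Definition \ref{def:6}. Its leaves are the maximal connected integral manifolds of $D$.

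\textbf{Step 4: leaves equal orbits.} Each orbit is an integral manifold of $D$ by Step 1. If $G$ is connected, each orbit is connected and is generated by flows of the $X_{M}$. These flows preserve every leaf, so each orbit is open in a leaf. The orbits then partition each connected leaf into disjoint open subsets, so each leaf is a single orbit. If $G$ is not connected, the same argument applied to the identity component $G^{0}$ identifies leaves with $G^{0}$-orbits. The statement should then be read for connected $G$, or as ``connected components of orbits''; I will note this caveat.

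The main obstacle is Step 1, specifically making precise what ``foliated'' means and showing that constant orbit dimension gives a smooth constant-rank distribution. Without the constant-rank hypothesis one only obtains a singular (Stefan–Sussmann) foliation, in the sense of \cite{Stefan1974,Sussmann1973}. Once Step 1 is settled, Steps 2 through 4 are standard.
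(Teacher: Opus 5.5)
The paper gives no proof of this proposition. It is quoted from \cite{feuille} as background, so there is no argument of the authors' to compare yours with.

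Your Frobenius argument is the standard one, and it is correct. That includes your reading of ``foliated'' as constant orbit dimension, a term the paper never defines. Your connectedness caveat is genuinely needed rather than cosmetic. A nontrivial finite group acting freely gives a foliated action whose orbits are finite sets of several points, while the leaves of the corresponding $0$-dimensional foliation are single points. For disconnected $G$, the leaves are the $G^{0}$-orbits.

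Two points are worth tightening.

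\textbf{Step~2.} The Jacobi identity plays no role here. The relation $[X_M,Y_M]=-[X,Y]_M$ holds because the right-invariant field $X^{R}$ on $G$ is related to $X_M$ under the orbit map $g\mapsto\psi_1(g,x)$, and $[X^{R},Y^{R}]=-[X,Y]^{R}$. Passing from fundamental fields to arbitrary sections of $D$ then uses the local frame of Step~1 together with the Leibniz rule for brackets.

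\textbf{Step~4.} Invariance of leaves under the flows only gives $\mathcal{O}_x(\psi_1)\subset L_x$. Openness of the orbit in $L_x$ is a separate fact, and it is where constant rank is used a second time. Suppose $(X_1)_M(y),\dots,(X_n)_M(y)$ span $D_y$, and consider the map $(t_1,\dots,t_n)\mapsto\psi_1(\exp(t_1X_1)\cdots\exp(t_nX_n),y)$.
\begin{itemize}
\item It takes values in $L_y$.
\item It is smooth as a map into $L_y$, because leaves are weakly embedded.
\item It has rank $n$ at $0$.
\end{itemize}
Hence it maps a neighbourhood of $0$ diffeomorphically onto a leaf-neighbourhood of $y$, and this neighbourhood lies in the orbit of $y$. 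With that made explicit, your partition argument completes the proof.
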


\subsection{Hessian Structures and the Koszul one-Form}\label{subsec:hessian_koszul}

Following the information-geometric formulations of Amari \cite{Shu}
and the foundational lectures of Koszul \cite{barbaresco}, a Hessian
structure on a manifold or a stable open cone $\Omega \subset
\mathfrak{g}$ provides the native framework connecting thermodynamic
potentials to metric geometry.

\begin{definition}[\textup{\cite{barbaresco, souriau}}]
Let $\Omega$ be a flat domain equipped with a flat affine connection
$D$. A Riemannian metric $g$ on $\Omega$ is called a Hessian metric
if it can be locally expressed as the Hessian of a smooth convex
function $\Phi$, known as the thermodynamic or characteristic
potential:
\begin{equation}
g_{ij} = \frac{\partial^2 \Phi}{\partial \beta_i \partial \beta_j}
\end{equation}
The canonical Koszul one-form $\beta$ associated with this geometry
is defined as the differential of the potential, $\beta =
\mathrm{d}\Phi$, which acts as an entry point for defining the
information volume element.
\end{definition}
In Jean-Marie Souriau's Lie group thermodynamics \cite{souriau1970},
when the coadjoint operator action is non-equivariant, the presence
of the Souriau symplectic cocycle $\theta(g)$ modifies the
transformation laws on $\mathfrak{g}^*$ \cite{souriau}. Under this
setting, the Hessian matrix of the potential $\Phi(\beta) = -\ln
\chi(\beta)$ perfectly mirrors the components of the Fisher--Souriau
information metric $I(\beta)$, establishing a bridge between
statistical mechanical states and the underlying homogeneous
symplectic architecture \cite{barbaresco1, marle2020}.

\subsection{Bi-Lagrangian Webs and Para-K\"{a}hler Manifolds}\label{subsec:bilagrangian_webs}

To accommodate geometric dissipation within a conservative
symplectic universe, we must invoke the dual geometric structures
introduced by Paulette Libermann \cite{libermann1983ref,
libermann1983ast}.

\begin{definition}\textup{\cite{libermann1983ref}}
A symplectic manifold $(M, \omega)$ of dimension $2n$ is said to
admit a bi-Lagrangian structure (or a 2-Lagrangian web) if its
tangent bundle splits into a direct sum of two complementary
Lagrangian distributions:
\begin{equation}
TM = T\mathcal{F}_1 \oplus T\mathcal{F}_2
\end{equation}
where $\omega(X, Y) = 0$ for all $X, Y \in T\mathcal{F}_i$
($i=1,2$). The pair $(\mathcal{F}_1, \mathcal{F}_2)$ defines a
symplectically complete foliation structure, or a Libermann
foliation, whose coordinates satisfy a split K\"{u}nneth
decomposition topology.
\end{definition}

\subsection{Hamiltonian Actions and the Momentum Map}\label{subsec:momentum_map}

Let $(M, \omega)$ be a smooth symplectic manifold. A vector field
$X$ on $M$ is called Hamiltonian if there exists a smooth function
$f \in C^\infty(M)$ such that $\iota_X \omega = \mathrm{d}f$. To
lift a symmetry group action to the symplectic framework, we
introduce the formal definition of a momentum map.

\begin{definition}\textup{\cite{LibermannMarle1987}}
Let $G$ be a Lie group acting on $(M, \omega)$ by
symplectomorphisms, and let $\mathfrak{g}$ be its associated Lie
algebra. The action is called Hamiltonian if there exists a smooth
map $J : M \to \mathfrak{g}^*$, called the momentum map (or
\textit{application moment}), such that:
\begin{enumerate}
    \item For every element $X \in \mathfrak{g}$, let $X_M$ be the fundamental vector field generated on $M$ by the infinitesimal action. Then:
    \begin{equation}
    \iota_{X_M} \omega = \mathrm{d}\langle J, X \rangle
    \end{equation}
    where $\langle \cdot, \cdot \rangle$ denotes the natural duality pairing between $\mathfrak{g}^*$ and $\mathfrak{g}$.
    \item The map $J$ is equivariant with respect to the action of $G$ on $M$ and the coadjoint action $\mathrm{Ad}^*$ of $G$ on $\mathfrak{g}^*$, meaning $J(g \cdot x) = \mathrm{Ad}_g^*(J(x))$ for all $g \in G, x \in M$.
\end{enumerate}
\end{definition}

As detailed in Souriau's core formulation \cite{souriau1970}, when
the coadjoint action is non-equivariant due to topological
constraints, a symplectic cocycle $\theta(g)$ appears, altering the
relation to $J(g \cdot x) = \mathrm{Ad}_g^*(J(x)) + \theta(g)$,
which directly leads to the definition of the generalized
information metric tensor $\tilde{\Theta}$ via the affine
transformation properties on the stable open cone $\Omega$

\begin{definition}[\textup{\cite{libermann1983ast, LibermannMarle1986}}]
A foliation $\mathcal{F}$ on a symplectic manifold $(M, \omega)$ is
called a Libermann foliation (or a symplectically complete
foliation) if the space of smooth functions that are constant along
the leaves of $\mathcal{F}$ is closed under the Poisson bracket
induced by $\omega$. That is:
\begin{equation}
\{f, g\} \in C^\infty(M / \mathcal{F}) \quad \forall f, g \in
C^\infty(M / \mathcal{F})
\end{equation}
where $C^\infty(M / \mathcal{F})$ denotes the algebra of first
integrals of the foliation.
\end{definition}

\subsection{The Poincar\'{e} Half-Plane Model $\mathbb{H}$ and the Poincar\'{e} Unit Disk Model $\mathbb{D}$}
\begin{definition}\textup{\cite{souriau1970}}
The Poincar{\'e} half-plane, denoted by $\mathbb{H}$, is the
Riemannian manifold defined by the open upper half of the complex
plane:
\begin{equation}
\mathbb{H} = \left\{ z \in \mathbb{C} \ \middle|\ \text{Im}(z) > 0
\right\}
\end{equation}
equipped with the conformal metric (Poincar{\'e} metric):
\begin{equation}
ds^2_{\mathbb{H}} = \frac{dx^2 + dy^2}{y^2}
\end{equation}
where $z = x + iy$. The hyperbolic geodesics within this space
consist of Euclidean vertical half-lines and Euclidean semicircles
centered on and orthogonal to the real axis.
\end{definition}

\begin{definition}\textup{ \cite{LibermannMarle1986}}
The Poincar{\'e} unit disk, denoted by $\mathbb{D}$, is the
Riemannian manifold defined by the open unit disk in the complex
plane:
\begin{equation}
\mathbb{D} = \left\{ w \in \mathbb{C} \ \middle|\ |w| < 1 \right\}
\end{equation}
equipped with the conformal metric:
\begin{equation}
ds^2_{\mathbb{D}} = 4\frac{du^2 + dv^2}{(1 - |w|^2)^2}
\end{equation}
where $w = u + iv$. The hyperbolic geodesics consist of the disk's
diameters and arcs of Euclidean circles intersecting the unit
boundary circle $|w|=1$ at strict right angles.
\end{definition}

\section{Moment Map, Fisher Metric, and Shannon Entropy}\label{sec3}

\begin{theorem}
Let $\left(\mathbb{H},\omega\right)$ be a homogeneous manifold with
$\omega=\frac{-2i}{\left(z-\bar{z}\right)^{2}}dz\wedge d\bar{z}$ the
closed K\"{a}hler two-form on the Poincar\'{e} half-plane. For any
$\alpha_{i}\in \mathfrak{sl}(2,\mathbb{R})$, $i\in \{1,2,3\}$, the
action of the group $SL(2,\mathbb{R})$ given by
$\phi:SL(2,\mathbb{R})\times \mathbb{H}\rightarrow \mathbb{H},\quad
z\mapsto g.z=\frac{az+b}{cz+d}$ generates the fundamental vector
fields $X_{1}(z)=1+z^2,\quad X_{2}(z)=2z,\quad X_{3}(z)=1-z^{2}$
associated respectively with the first integrals
$J_{1}(z,\bar{z})=\frac{-2i\left(1+z\bar{z}\right)}{z-\bar{z}},\quad
J_{2}(z,\bar{z})=\frac{-2i\left(z+\bar{z}\right)}{z-\bar{z}},\quad
J_{3}(z,\bar{z})=\frac{-2i\left(1-z\bar{z}\right)}{z-\bar{z}}$ while
leaving $\omega$ invariant. There exists an equivariant momentum map
$J:\mathbb{H}\rightarrow \mathfrak{sl}^{*}(2,\mathbb{R})$ defined by
its matrix form:
\begin{equation*}
J(z,\bar{z})=\frac{2i}{z-\bar{z}}\left(
                                                                     \begin{array}{cc}
                                                                       -(z+\bar{z}) & 2z\bar{z} \\
                                                                       -2  & z+\bar{z} \\
                                                                     \end{array}
                                                                   \right)
\end{equation*}
\end{theorem}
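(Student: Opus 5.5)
The plan is to reduce everything to real coordinates $z=x+iy$, $y>0$, and then verify the three claims in turn: invariance of $\omega$, the Hamiltonian relations $\iota_{X_k}\omega=\mathrm{d}J_k$, and equivariance of the matrix $J$. Since $z-\bar z=2iy$, we get $(z-\bar z)^2=-4y^2$ and $\mathrm{d}z\wedge \mathrm{d}\bar z=-2i\,\mathrm{d}x\wedge \mathrm{d}y$. Hence $\omega=\frac{\mathrm{d}x\wedge \mathrm{d}y}{y^2}$, the area form of the Poincar\'e metric of the half-plane definition given earlier. It is closed, since it is a top-degree form on a surface. Invariance under $z\mapsto g.z$ follows from the identities $\mathrm{Im}(g.z)=y/|cz+d|^2$ and $\frac{\mathrm{d}(g.z)}{\mathrm{d}z}=(cz+d)^{-2}$. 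Together they show that the pullback of $\mathrm{d}z\wedge \mathrm{d}\bar z/(\mathrm{Im} z)^2$ equals itself.

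Next I would compute the fundamental vector fields from an explicit basis of $\mathfrak{sl}(2,\mathbb{R})$. Differentiating $\exp(t\alpha_k).z$ at $t=0$ gives the following:
\begin{itemize}
\item $\alpha_1=\begin{pmatrix}0&1\\-1&0\end{pmatrix}$ gives $1+z^2$;
\item $\alpha_2=\begin{pmatrix}1&0\\0&-1\end{pmatrix}$ gives $2z$;
\item $\alpha_3=\begin{pmatrix}0&1\\1&0\end{pmatrix}$ gives $1-z^2$.
\end{itemize}
Each is read as the real vector field $X_k=f_k(z)\partial_z+\overline{f_k(z)}\partial_{\bar z}$. Then $\iota_{X_k}\omega=\frac{-2i}{(z-\bar z)^2}\big(f_k\,\mathrm{d}\bar z-\bar f_k\,\mathrm{d}z\big)$. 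I would check directly that this equals $\mathrm{d}J_k$ by differentiating $J_k$ with respect to $z$ and $\bar z$. For instance, $\partial_{\bar z}J_2=\frac{-2i\,(-2z)}{(z-\bar z)^2}\cdot\frac12\cdot\ldots$, which should reduce to $\frac{-2i\,f_2}{(z-\bar z)^2}$. The overall sign fixes the convention $\iota_X\omega=\mathrm{d}f$ versus $-\mathrm{d}f$; if a global sign appears I would absorb it into the orientation of the $\alpha_k$. Invariance also follows independently from $\mathcal{L}_{X_k}\omega=\mathrm{d}\iota_{X_k}\omega=\mathrm{d}\mathrm{d}J_k=0$. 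The $J_k$ are then first integrals of the corresponding Hamiltonian flows in the sense of the momentum map definition.

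For the matrix form and equivariance, identify $\mathfrak{sl}^*(2,\mathbb{R})\cong\mathfrak{sl}(2,\mathbb{R})$ via the trace pairing, so that $\mathrm{Ad}^*_g$ becomes conjugation $M\mapsto gMg^{-1}$. In real coordinates the proposed map is
\[
J=\frac1y\begin{pmatrix}-2x&2(x^2+y^2)\\-2&2x\end{pmatrix},
\]
and in particular $J(i)=A:=\begin{pmatrix}0&2\\-2&0\end{pmatrix}$. With $g_z=\begin{pmatrix}\sqrt y&x/\sqrt y\\0&1/\sqrt y\end{pmatrix}$ one has $g_z.i=z$, and a direct multiplication should give $J(z)=g_zAg_z^{-1}$. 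This expression is well defined on $\mathbb{H}=SL(2,\mathbb{R})/SO(2)$ because $A$ commutes with the stabilizer $SO(2)$ of $i$. Equivariance is then immediate: $J(h.z)=(hg_z)A(hg_z)^{-1}=hJ(z)h^{-1}$. Finally I would verify that the pairings $\mathrm{tr}(J\,\alpha_k)$ (up to a fixed constant) reproduce $J_1,J_2,J_3$, so the matrix $J$ is indeed the momentum map.

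The main obstacle is bookkeeping of conventions rather than any conceptual step. The sign in $\iota_X\omega=\pm \mathrm{d}J$, the normalisation of the trace pairing, and the choice of sign for $\alpha_k$ must all be made consistent with the stated formulas for $J_k$. I would fix these first on the simplest case $X_2=2z$, the dilation generator, and then propagate them to the other two.
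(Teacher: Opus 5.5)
Your proposal is correct. For the vector fields and the relations $\iota_{X_k}\omega=\mathrm{d}J_k$ it follows the paper. The paper uses the same basis $\alpha_1,\alpha_2,\alpha_3$ and expands $\iota_{X_k}\omega$ by the Leibniz rule. It then integrates the resulting system for $\partial_z J_k,\partial_{\bar z}J_k$ instead of differentiating the given $J_k$, which is the same computation run backwards.

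The genuine difference is in the matrix form and equivariance. The paper's proof of this theorem ends by listing a dual basis $\alpha_k^*$ and the pairing $\frac12\mathrm{Tr}$. It does not check invariance of $\omega$, the identity $J=\sum_k J_k\alpha_k^*$, or equivariance. Equivariance is only addressed in the proof of Theorem~\ref{tho3}, by substituting $g.z$ into the matrix and comparing entrywise with $gJ(z)g^{-1}$ under $ad-bc=1$. Even there the final multiplication is left to the reader.

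Your homogeneous-space argument uses $J(i)=A=2\alpha_1$ and $J(z)=g_zAg_z^{-1}$, which is well defined because $A$ centralizes the stabilizer $SO(2)$ of $i$. This makes equivariance a one-line consequence. It also identifies $J(\mathbb{H})$ with the coadjoint orbit of $A$, i.e.\ one sheet of $\{\det=4\}$, which is what the later constant Casimir value $H=-2$ expresses. The paper's route is more elementary but heavier.

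All your specific claims check out: $\omega=\mathrm{d}x\wedge\mathrm{d}y/y^2$, $\frac12\mathrm{Tr}(J\alpha_k)=J_k$, and $\iota_{X_k}\omega=\mathrm{d}J_k$ holds with no sign correction. Your displayed intermediate formula for $\partial_{\bar z}J_2$ is garbled; the correct value is $-4iz/(z-\bar z)^2$. Your fallback of absorbing a sign into $\alpha_k$ would not have been legitimate anyway, since it would also reverse the stated $X_k$.
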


\begin{proof}
Let $\alpha_1=\left(
                                                   \begin{array}{cr}
                                                     0 & 1 \\
                                                    - 1 & 0 \\
                                                   \end{array}
                                                 \right),\quad\alpha_2=\left(
                                                   \begin{array}{cr}
                                                     1& 0 \\
                                                     0 & -1\\
                                                   \end{array}
                                                 \right),\quad \alpha_3=\left(
                                                   \begin{array}{cr}
                                                     0& 1 \\
                                                     1 & 0\\
                                                   \end{array}
                                                 \right)$ be a basis
                                                 of the Lie algebra
                                                 $\mathfrak{sl}(2,\mathbb{R})$.
For any $\alpha_{i}\in \mathfrak{sl}(2,\mathbb{R})$, $i\in
\{1,2,3\}$, the infinitesimal action is given by
$X_{i}(z)=\frac{d}{dt}|_{t=0}\exp(t\alpha_{i}).z$, for $i\in
\{1,2,3\}$. The fundamental vector fields are given by
\begin{eqnarray*}
  X_{1}(z)&=&\frac{d}{dt}|_{t=0}\exp(t\alpha_{1}).z,\quad
 X_{2}(z)=\frac{d}{dt}|_{t=0}\exp(t\alpha_{2}).z\\
 X_{3}(z)&=&\frac{d}{dt}|_{t=0}\exp(t\alpha_{3}).z
\end{eqnarray*}
where
\begin{eqnarray*}
\exp(t\alpha_{1})&=& \left(
                       \begin{array}{cc}
                         \cos t & \sin t \\
                         -\sin t & \cos t \\
                       \end{array}
                     \right),
 \quad
 \exp(t\alpha_{2}) = \left(
                       \begin{array}{cc}
                          \exp(t)  & 0 \\
                         0& \exp(-t) \\
                       \end{array}
                     \right), \quad
  \exp(t\alpha_{3})=  \left(
                       \begin{array}{cc}
                          \cosh(t)& \sinh(t) \\
                         \sinh(t)& \cosh(t) \\
                       \end{array}
                     \right)
\end{eqnarray*}
The action of the group $SL(2,\mathbb{R})$ given by
$\phi:SL(2,\mathbb{R})\times \mathbb{H}\rightarrow \mathbb{H},\quad
g.z=\frac{az+b}{cz+d}$ generates the fundamental vector fields
$X_{1}(z)=1+z^2,\quad X_{2}(z)=2z,\quad X_{3}(z)=1-z^{2}$. These
fundamental fields are Hamiltonian vector fields, i.e.,
$i_{X_{k}}\omega=dJ_{k},\quad k=1,2,3$.

Thus, for each field we have
\begin{eqnarray*}
  i_{X_{1}}\omega&=&\frac{-2i}{\left(z-\bar{z}\right)^{2}}\left(i_{X_{1}}dz\wedge d\bar{z}\right),\quad
i_{X_{2}}\omega=\frac{-2i}{\left(z-\bar{z}\right)^{2}}\left(i_{X_{2}}dz\wedge d\bar{z}\right)\\
 i_{X_{3}}\omega&=&\frac{-2i}{\left(z-\bar{z}\right)^{2}}\left(i_{X_{3}}dz\wedge d\bar{z}\right)
\end{eqnarray*}
therefore, by the Leibniz rule, we have
\begin{eqnarray}\label{e}
  i_{X_{1}}\omega&=&\frac{-2i}{\left(z-\bar{z}\right)^{2}}\left((i_{X_{1}}dz)d\bar{z}- dz(i_{X_{1}}d\bar{z})\right)\\
i_{X_{2}}\omega&=&\frac{-2i}{\left(z-\bar{z}\right)^{2}}\left((i_{X_{2}}dz)d\bar{z}- dz(i_{X_{2}}d\bar{z})\right)\\
 i_{X_{3}}\omega&=&\frac{-2i}{\left(z-\bar{z}\right)^{2}}\left((i_{X_{3}}dz)d\bar{z}-dz(i_{X_{3}}d\bar{z})\right)
\end{eqnarray}
where
\begin{eqnarray}\label{e1}
  X_{1}(z)&=&(1+z^2)\frac{\partial}{\partial z}+(1+\bar{z}^2)\frac{\partial}{\partial \bar{z}},\quad
 X_{2}(z)=2z\frac{\partial}{\partial z}+2\bar{z}\frac{\partial}{\partial \bar{z}}\\
  X_{3}(z)&=&(1-z^{2})\frac{\partial}{\partial z}+(1-\bar{z}^{2})\frac{\partial}{\partial \bar{z}}
\end{eqnarray}
Using (\ref{e1}) in (\ref{e}), we obtain
\begin{eqnarray}\label{e2}
  i_{X_{1}}\omega&=&\frac{-2i}{\left(z-\bar{z}\right)^{2}}\left((1+z^2)d\bar{z}-(1+\bar{z}^2)dz\right),\quad
i_{X_{2}}\omega=\frac{-2i}{\left(z-\bar{z}\right)^{2}}\left(2zd\bar{z}-2\bar{z}dz\right)\\
 i_{X_{3}}\omega&=&\frac{-2i}{\left(z-\bar{z}\right)^{2}}\left((1-z^{2})d\bar{z}-(1-\bar{z}^{2})dz\right)
\end{eqnarray}
where
\begin{eqnarray}\label{e3}
 dJ_{1}=\frac{\partial J_{1}}{\partial z}dz+\frac{\partial J_{1}}{\partial \bar{z}}d\bar{z},\quad
dJ_{2}=\frac{\partial J_{2}}{\partial z}dz+\frac{\partial
J_{2}}{\partial \bar{z}}d\bar{z},\quad
 dJ_{3}=\frac{\partial J_{3}}{\partial z}dz+\frac{\partial J_{3}}{\partial \bar{z}}d\bar{z}
\end{eqnarray}
By identification, we obtain the following differential systems:
\begin{multicols}{3}
$\left\{
   \begin{array}{ll}
     \frac{\partial J_{1}}{\partial z} =\frac{2i(1+\bar{z}^2)}{\left(z-\bar{z}\right)^{2}}& \hbox{} \\
     \frac{\partial J_{1}}{\partial \bar{z}} =\frac{-2i(1+z^2)}{\left(z-\bar{z}\right)^{2}}& \hbox{.}
   \end{array}
 \right.
$

$\left\{
   \begin{array}{ll}
     \frac{\partial J_{2}}{\partial z} =\frac{4i\bar{z}}{\left(z-\bar{z}\right)^{2}}&\hbox{} \\
     \frac{\partial J_{2}}{\partial \bar{z}} = \frac{-4iz}{\left(z-\bar{z}\right)^{2}}&  \hbox{.}
   \end{array}
 \right.
$

$\left\{
   \begin{array}{ll}
     \frac{\partial J_{3}}{\partial z} =\frac{2i(1-\bar{z}^2)}{\left(z-\bar{z}\right)^{2}}&\hbox{} \\
     \frac{\partial J_{3}}{\partial \bar{z}} = \frac{-2i(1-z^2)}{\left(z-\bar{z}\right)^{2}}&  \hbox{.}
   \end{array}
 \right.
$
\end{multicols}
Integrating these systems yields:
\begin{equation*}
J_{1}(z,\bar{z})=\frac{-2i\left(1+z\bar{z}\right)}{z-\bar{z}},\quad
J_{2}(z,\bar{z})=\frac{-2i\left(z+\bar{z}\right)}{z-\bar{z}},\quad
J_{3}(z,\bar{z})=\frac{-2i\left(1-z\bar{z}\right)}{z-\bar{z}}
\end{equation*}
A dual basis is $\alpha^{*}_1=\left(
                                                   \begin{array}{cr}
                                                     0 & -1 \\
                                                     1 & 0 \\
                                                   \end{array}
                                                 \right),\quad\alpha^{*}_2=\left(
                                                   \begin{array}{cr}
                                                     1 & 0 \\
                                                     0 & -1 \\
                                                   \end{array}
                                                 \right),\quad\alpha^{*}_3=\left(
                                                   \begin{array}{cr}
                                                     0 & 1 \\
                                                     1 & 0 \\
                                                   \end{array}
                                                 \right)$.

The duality coupling is defined by
$\langle\alpha,\alpha^{*}\rangle:=\frac{1}{2}\text{Tr}(\alpha\alpha^{*})$.
\end{proof}

\begin{theorem}\label{tho1}
Let $\mathfrak{sl}(2,\mathbb{R})$ be the three-dimensional real
traceless Lie algebra equipped with the basis
$\{\alpha_{1},\alpha_{2},\alpha_{3}\}$ and the stable open cone
$\Omega = \left\{\beta\in\mathfrak{sl}(2,\mathbb{R}) \mid
\beta^{2}_{1}-\beta^{2}_{2}-\beta^{2}_{3}>0\right\}$. The
thermodynamic potential $\Phi(\beta) = -\ln \chi(\beta)$ defined on
$\Omega$ is given by $\Phi(\beta) = -\ln(2\pi) +
\sqrt{\beta^{2}_{1}-\beta^{2}_{2}-\beta^{2}_{3}} +
\frac{1}{2}\ln(\beta^{2}_{1}-\beta^{2}_{2}-\beta^{2}_{3}) + C$,
where $C\in \mathbb{R}$. The  Koszul 1-form $Q = \Theta(\beta) =
\frac{\partial \Phi}{\partial \beta} \in
\mathfrak{sl}^*(2,\mathbb{R})$ defines a global diffeomorphism
$\Theta: \Omega \to \Omega^*$ whose exact coordinate components $Q =
Q_1\alpha_1^* + Q_2\alpha_2^* + Q_3\alpha_3^*$. The inverse Legendre
mapping $\Theta^{-1}: \Omega^* \to \Omega$ reconstructs the
canonical parameter vector $\beta = \Theta^{-1}(Q)$ from the mean
momentum vector $Q$ via the Legendre dual relation $\beta_i = Q_i
\frac{\partial \Phi(\Theta^{-1}(Q))}{\partial Q_i} - \frac{\partial
\Phi(\Theta^{-1}(Q))}{\partial Q_i}$. The covariant Gibbs density
$p(z,\bar{z})$ on the Poincar\'{e} half-plane $\mathbb{H}$ expressed
directly as a function of the Koszul mean momentum map $Q$ and the
inverse Legendre diffeomorphism $\Theta^{-1}$ is given by:
\begin{equation}
    p(z,\bar{z}) = \frac{\exp\left( \left\langle Q , \Theta^{-1}(Q) \right\rangle \right)}{2\pi \sqrt{\left[\Theta^{-1}(Q)\right]^{2}_{1}-\left[\Theta^{-1}(Q)\right]^{2}_{2}-\left[\Theta^{-1}(Q)\right]^{2}_{3}}} \exp\left( - \left\langle J(z,\bar{z}) , \Theta^{-1}(Q) \right\rangle \right)
\end{equation}
where $J(z,\bar{z})$ is the equivariant momentum map on $\mathbb{H}$
and $\langle \cdot, \cdot \rangle$ denotes the duality pairing on
$\mathfrak{sl}^*(2,\mathbb{R}) \times \mathfrak{sl}(2,\mathbb{R})$.
\end{theorem}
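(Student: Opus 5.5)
The plan is to compute the partition function $\chi(\beta)=\int_{\mathbb{H}} e^{-\langle J(z,\bar z),\beta\rangle}\, d\lambda_\omega$ explicitly using the momentum map $J$ of the previous theorem. Then I read off $\Phi$, the Koszul form $Q=\partial\Phi/\partial\beta$, its invertibility, and finally substitute into the Gibbs density $p=e^{-\langle J,\beta\rangle}/\chi(\beta)$.

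\textbf{Partition function.} With $z=x+iy$, the formulas of the previous theorem give
\[
J_1=-\frac{1+x^2+y^2}{y},\qquad J_2=-\frac{2x}{y},\qquad J_3=-\frac{1-x^2-y^2}{y}.
\]
So, with the pairing $\langle\cdot,\cdot\rangle=\tfrac12\mathrm{Tr}$ and the stated dual basis, $\langle J,\beta\rangle=\sum_i J_i\beta_i$ up to the sign conventions to be fixed. The measure is the Liouville measure $\omega=dx\wedge dy/y^2$.

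\textbf{Reduction by equivariance.} Since $J$ is equivariant and the Liouville measure is $SL(2,\mathbb{R})$-invariant, $\chi$ is $\mathrm{Ad}$-invariant on $\Omega$. For $\beta$ in the elliptic cone $\beta_1^2-\beta_2^2-\beta_3^2>0$, conjugating by $SO(2)$-type elements reduces to $\beta=(s,0,0)$ with $s=\sqrt{\beta_1^2-\beta_2^2-\beta_3^2}$. Passing to the disk via the Cayley transform, $J_1$ becomes a multiple of $\cosh r$ in geodesic polar coordinates $(r,\varphi)$, with measure $\sinh r\,dr\,d\varphi$. Hence
\[
\chi=2\pi\int_0^\infty e^{-s\cosh r}\sinh r\,dr=\frac{2\pi}{s}\,e^{-s},
\]
possibly with a constant rescaling of $s$ absorbed into $C$. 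Taking $-\ln$ gives $\Phi=-\ln 2\pi+s+\ln s+C$, which is exactly the stated form since $\ln s=\tfrac12\ln(\beta_1^2-\beta_2^2-\beta_3^2)$. Convergence needs $\beta$ in the future cone ($\beta_1>0$), so I will note that $\Omega$ should be taken as that sheet.

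\textbf{Koszul form.} Next I compute
\[
Q_i=\frac{\partial\Phi}{\partial\beta_i}=\Bigl(1+\frac1s\Bigr)\frac{\eta_i\beta_i}{s},\qquad \eta=(1,-1,-1).
\]
This map is radial in Lorentzian terms: $Q$ is a positive multiple $f(s)=(1+1/s)/s$ of $\eta\beta$. It is therefore injective as soon as $s\mapsto s\,f(s)=1+1/s$ is strictly monotone, and the Lorentz norm of $Q$ equals $1+1/s$, which is a decreasing bijection $(0,\infty)\to(1,\infty)$.

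\textbf{Inverse and image.} Solving for $s$ in terms of $|Q|$ gives the explicit inverse $\Theta^{-1}$, and the image is $\Omega^*=\{Q_1^2-Q_2^2-Q_3^2>1,\ Q_1>0\}$ in these coordinates. Global diffeomorphism then follows from the positive definiteness of the Hessian, i.e.\ the Fisher metric, which is nondegenerate because $\Phi$ is strictly convex as a log-Laplace transform. The Legendre relation follows from $\Phi^*(Q)=\langle Q,\beta\rangle-\Phi(\beta)$ and $\partial\Phi^*/\partial Q=\beta$; I will derive it in this standard form and reconcile it with the formula as written.

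\textbf{Density.} Substituting $\beta=\Theta^{-1}(Q)$ into $p=e^{-\langle J,\beta\rangle}/\chi(\beta)=\frac{s\,e^{s}}{2\pi}\,e^{-\langle J,\beta\rangle}$ gives the result. The prefactor in the statement, $e^{\langle Q,\beta\rangle}/(2\pi s)$, corresponds to the normalisation $e^{\Phi}$ after expressing $s$ through $\langle Q,\beta\rangle=s+1$ (up to constants fixed by $C$).

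\textbf{Main obstacle.} The hardest step will be consistently fixing signs and constants: the pairing normalisation, the orientation of $\omega$, and the constant $C$, so that the exponent and the prefactor match exactly as written. This may force interpreting the prefactor modulo $C$.
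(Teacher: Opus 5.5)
Your route is the paper's route: integrate to get $\chi$, take $-\ln$, differentiate, then substitute $\beta=\Theta^{-1}(Q)$ into $e^{\Phi}e^{-\langle J,\beta\rangle}$. Your intermediate identities are right. The gap is the last step, and it is not a matter of signs or of $C$. Since $s=\sqrt{\beta_1^2-\beta_2^2-\beta_3^2}$ is $1$-homogeneous, $\langle Q,\beta\rangle=\sum_i\beta_i\,\partial_{\beta_i}\Phi=s+1$. The prefactor in the statement is therefore $e^{s+1}/(2\pi s)$, while the normalisation is $e^{\Phi}=e^{C}s\,e^{s}/(2\pi)$. Their ratio is $e^{1-C}/s^{2}$, which depends on $\beta$, so no choice of $C$ (and no constant rescaling of the measure) reconciles them. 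Concretely, even with the paper's own $\chi=2\pi e^{-s}/s$, the displayed density has total mass $\frac{e^{s+1}}{2\pi s}\,\chi(\beta)=e/s^{2}\neq 1$. The displayed formula is therefore false. The paper's proof reaches it only by substituting the identity $\Phi(\Theta^{-1}(Q))=\langle Q,\Theta^{-1}(Q)\rangle-\ln(2\pi s)$, which is printed with the opposite sign on the logarithm one line earlier. Your own computation refutes this identity, since it would force $2\ln s=1-C$. What your argument actually establishes is
\[
p(z,\bar z)=\frac{e^{C}\,s}{2\pi e}\,e^{\langle Q,\beta\rangle}\,e^{-\langle J(z,\bar z),\beta\rangle},\qquad \beta=\Theta^{-1}(Q),
\]
that is, $e^{\Phi}=e^{\langle Q,\beta\rangle-\Phi^{*}(Q)}$ with $\Phi^{*}(Q)=1+\ln(2\pi)-\ln s-C$. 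The factor $s$ belongs in the numerator.

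The same ``absorb it into $C$'' move is also unjustified in your partition-function step. For the paper's $J$ one has $J_1^2-J_2^2-J_3^2=4$ and $J_1=-(1+x^2+y^2)/y=-2\cosh\rho$, where $\rho=d(z,i)$. So the integral converges only on the sheet $\beta_1<0$ (not $\beta_1>0$), and there $\chi=\pi e^{-2s}/s$, that is, $\Phi=-\ln\pi+2s+\ln s$. This differs from the stated $\Phi$ by $s$ plus a constant: rescaling $s$ inside the exponential is not an additive shift of $\Phi$. The stated $\Phi$ is the one for the normalised map $J/2$ (orbit $J_1^2-J_2^2-J_3^2=1$). You should make that replacement explicitly, and then use $J/2$ in the density as well. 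Likewise, the statement's relation $\beta_i=(Q_i-1)\,\partial_{Q_i}\Phi(\Theta^{-1}(Q))$ cannot be ``reconciled'' with $\beta=\partial\Phi^{*}/\partial Q$: from $s=1/(|Q|-1)$ one gets $\partial_{Q_i}[\Phi\circ\Theta^{-1}]=-(s+1)\beta_i$. On the positive side, your radial argument is a complete proof that $\Theta$ maps the convergent sheet diffeomorphically onto $\{|Q|>1\}$ in the corresponding sheet. It uses $Q_i=\frac{s+1}{s^{2}}\,\eta_i\beta_i$ with your $\eta=(1,-1,-1)$ and Lorentz norm $|Q|=1+1/s$, and gives the explicit inverse $\beta_i=\eta_iQ_i/\bigl(|Q|(|Q|-1)\bigr)$. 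The paper gives no argument for this at all. Note only that $\Phi=-\ln\chi$ is strictly concave, because the log-partition function is convex, so the Fisher metric is $-\partial^{2}\Phi$; nondegeneracy is all you actually use.
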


\begin{proof}
The partition function is given by $\chi(\beta)=\int_{\mathbb{H}}
\mathrm{e}^{-\langle J(z,\bar{z}),\beta\rangle}
\mathrm{d}\lambda(z)$. By using the previous theorem and integrating
with respect to the invariant Poincar\'{e} volume element, we indeed
obtain:
\begin{equation*}
    \chi(\beta)=\frac{2\pi \cdot e^{-\sqrt{\beta^{2}_{1}-\beta^{2}_{2}-\beta^{2}_{3}}}}{\sqrt{\beta^{2}_{1}-\beta^{2}_{2}-\beta^{2}_{3}}}
\end{equation*}
The potential function is $\Phi(\beta)=-\ln \chi(\beta)$, hence:
\begin{equation*}
    \Phi(\beta)=-\ln(2\pi)+\sqrt{\beta^{2}_{1}-\beta^{2}_{2}-\beta^{2}_{3}}+\frac{1}{2}\ln(\beta^{2}_{1}-\beta^{2}_{2}-\beta^{2}_{3})+C, \quad C\in \mathbb{R}
\end{equation*}
The covariant Gibbs density on the Poincar\'{e} half-plane, given by
the momentum map of the non-compact Lie algebra, is expressed as:
\begin{equation*}
    P(\beta,z)=\frac{\mathrm{e}^{-\langle J(z,\bar{z}),\beta\rangle}}{\int_{\mathbb{H}} \mathrm{e}^{-\langle J(z,\bar{z}),\beta\rangle} \mathrm{d}\lambda(z)} = \frac{\mathrm{e}^{-\langle J(z,\bar{z}),\beta\rangle}}{\chi(\beta)} = \mathrm{e}^{\Phi(\beta)} \mathrm{e}^{-\langle J(z,\bar{z}),\beta\rangle}
\end{equation*}
To express this density purely as a function of the Koszul mean
momentum map $Q$ and the inverse Legendre diffeomorphism
$\Theta^{-1}$, we substitute $\beta = \Theta^{-1}(Q)$ into the
expression. By applying the Legendre dual relationship, the
thermodynamic potential rewrites exactly as $\Phi(\Theta^{-1}(Q)) =
\langle Q, \Theta^{-1}(Q) \rangle - \ln\left(2\pi
\sqrt{\left[\Theta^{-1}(Q)\right]^{2}_{1}-\left[\Theta^{-1}(Q)\right]^{2}_{2}-\left[\Theta^{-1}(Q)\right]^{2}_{3}}\right)^{-1}$.

Substituting this back into the exponential factor and separating
the terms yields:
\begin{align*}
    p(z,\bar{z}) &= \mathrm{e}^{\langle Q, \Theta^{-1}(Q) \rangle - \ln\left(2\pi \sqrt{\left[\Theta^{-1}(Q)\right]^{2}_{1}-\left[\Theta^{-1}(Q)\right]^{2}_{2}-\left[\Theta^{-1}(Q)\right]^{2}_{3}}\right)} \mathrm{e}^{-\langle J(z,\bar{z}), \Theta^{-1}(Q) \rangle} \\
    &= \frac{\exp\left( \left\langle Q , \Theta^{-1}(Q) \right\rangle \right)}{2\pi \sqrt{\left[\Theta^{-1}(Q)\right]^{2}_{1}-\left[\Theta^{-1}(Q)\right]^{2}_{2}-\left[\Theta^{-1}(Q)\right]^{2}_{3}}} \exp\left( - \left\langle J(z,\bar{z}) , \Theta^{-1}(Q) \right\rangle \right)
\end{align*}
which removes the non-homogeneous shifted term $-Q$ from the
internal pairing and achieves the desired explicit product form.

By using the Hessian property of the thermodynamic potential
$I(\beta)_{ij} = \frac{\partial^2 \Phi}{\partial \beta_i \partial
\beta_j}$, we have:
\begin{align*}
I(\beta) &=
\frac{1}{\left(\beta^{2}_{1}-\beta^{2}_{2}-\beta^{2}_{3}\right)^{\frac{3}{2}}}\left(
  \begin{array}{ccc}
    -\beta^{2}_{2}-\beta^{2}_{3} & \beta_{1}\beta_{2} & \beta_{1}\beta_{3} \\
    \beta_{1}\beta_{2} & -\beta^{2}_{1}+\beta^{2}_{3} & -\beta_{2}\beta_{3} \\
    \beta_{1}\beta_{3} & -\beta_{2}\beta_{3} & -\beta^{2}_{1}+\beta^{2}_{2}
  \end{array}
\right) \\
&\quad +
\frac{1}{\left(\beta^{2}_{1}-\beta^{2}_{2}-\beta^{2}_{3}\right)^{2}}\left(
  \begin{array}{ccc}
    -\beta^{2}_{1}-\beta^{2}_{2}-\beta^{2}_{3} & 2\beta_{1}\beta_{2} & 2\beta_{1}\beta_{3} \\
    2\beta_{1}\beta_{2} & -\beta^{2}_{1}-\beta^{2}_{2}+\beta^{2}_{3} & -2\beta_{2}\beta_{3} \\
    2\beta_{1}\beta_{3} & -2\beta_{2}\beta_{3} & -\beta^{2}_{1}+\beta^{2}_{2}-\beta^{2}_{3}
  \end{array}
\right)
\end{align*}
\end{proof}

\section{Metriplectic Flow}\label{sec4}

\begin{theorem}\label{tho2}
Let $\left(\mathbb{H},\omega\right)$ be a homogeneous manifold with
$\omega=\frac{-2i}{\left(z-\bar{z}\right)^{2}}dz\wedge d\bar{z}$ the
closed K\"{a}hler two-form on the Poincar\'{e} half-plane. The
entropy function $S(\eta)=1-\Phi(\beta)$ given by the Legendre
equation and the Hamiltonian function given by its quadratic form
\begin{equation*}
    H(z,\bar{z})=\frac{1}{2}\left(J^{2}_{2}+J^{2}_{3}-J^{2}_{1}\right) = -2
\end{equation*}
are the invariants or Casimirs, with
$J_{1}(z,\bar{z})=\frac{-2i\left(1+z\bar{z}\right)}{z-\bar{z}}$,
$J_{2}(z,\bar{z})=\frac{-2i\left(z+\bar{z}\right)}{z-\bar{z}}$, and
$J_{3}(z,\bar{z})=\frac{-2i\left(1-z\bar{z}\right)}{z-\bar{z}}$. The
metriplectic system is given by:
\begin{equation*}
    \frac{dz}{dt}=\left\{z,H\right\} +\left(z,S\right)
\end{equation*}
where $\left\{z,H\right\}=0$ and
$\left(z,S\right)=\frac{2i}{\beta^{2}_{1}-\beta^{2}_{2}-\beta^{2}_{3}}\left(\beta_{1}\left(1+z^{2}\right)
- 2\beta_{2}z + \beta_{3}\left(1-z^{2}\right)\right)$.
\end{theorem}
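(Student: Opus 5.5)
The plan is to split the statement into three claims and check each one in turn: first the identity $H\equiv-2$, second the Casimir property of $H$ and $S$ (which gives $\{z,H\}=0$), and third the explicit form of the dissipative bracket $(z,S)$.

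\textbf{Step 1 (the quadratic form).} Write $z=x+iy$ with $y>0$, so that $z-\bar z=2iy$. The first integrals from the first theorem of Section~\ref{sec3} then become real functions:
\begin{equation*}
J_1=-\frac{1+x^2+y^2}{y},\qquad J_2=-\frac{2x}{y},\qquad J_3=-\frac{1-x^2-y^2}{y}.
\end{equation*}
Set $r^2=x^2+y^2$. Then
\begin{equation*}
J_2^2+J_3^2-J_1^2=\frac{4x^2+(1-r^2)^2-(1+r^2)^2}{y^2}=\frac{4x^2-4r^2}{y^2}=-4 ,
\end{equation*}
so $H\equiv-2$ on all of $\mathbb{H}$. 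This also has a structural reading. By equivariance of $J$, the image $J(\mathbb{H})$ is a single coadjoint orbit, namely one sheet of the hyperboloid $J_1^2-J_2^2-J_3^2=4$. Hence $H$ is the restriction of the Killing-type Casimir $\tfrac12(\xi_2^2+\xi_3^2-\xi_1^2)$ of $\mathfrak{sl}^*(2,\mathbb{R})$, pulled back by $J$.

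\textbf{Step 2 (Casimirs and $\{z,H\}=0$).} Since $H$ is constant, $dH=0$, and the Hamiltonian vector field $X_H$ defined by $i_{X_H}\omega=dH$ vanishes. Therefore $\{z,H\}=X_H(z)=0$.

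For $S$, I use the Legendre relation from Theorem~\ref{tho1}. The entropy $S$ depends only on the thermodynamic variables $(\beta,Q)$ and not on the point $(z,\bar z)$. As a function on $\mathbb{H}$ it therefore has zero differential, so $\{f,S\}=0$ for every $f$, and $S$ is a Casimir. I will note that in the metriplectic framework these are exactly the two degeneracy conditions needed: $\{\cdot,S\}=0$, and $(\cdot,H)=0$ because $dH=0$.

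\textbf{Step 3 (the dissipative bracket).} I define the symmetric bracket on the orbit through the Fisher--Souriau metric $I(\beta)$ of Theorem~\ref{tho1}, transported to $\mathbb{H}$ by the infinitesimal action. The argument runs as follows.
\begin{itemize}
\item By the Legendre duality $\partial S/\partial Q=\beta$, the gradient of $S$ with respect to the dual metric is the Lie algebra element $\beta$, up to the metric normalization factor.
\item Its image on $\mathbb{H}$ is the fundamental vector field $\sum_i\beta_i X_i(z)$, with the $X_i$ taken from the first theorem of Section~\ref{sec3}.
\item Pairing $\beta$ with the fields uses the trace pairing $\tfrac12\mathrm{Tr}(\alpha\alpha^*)$ and the dual basis $\alpha_i^*$. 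The sign flips between $\alpha_1$ and $\alpha_1^*$ and the Lorentzian signature $(+,-,-)$ should then produce the combination $\beta_1(1+z^2)-2\beta_2z+\beta_3(1-z^2)$.
\item The scalar prefactor $2i/(\beta_1^2-\beta_2^2-\beta_3^2)$ should come from two sources: the conformal factor $(z-\bar z)^{-2}$ of $\omega$, which converts the real metric to the $\partial_z$ component, and the $\Delta^{-1}$ part of the inverse Fisher metric evaluated along the direction $\beta$. For the latter I will use the fact that $\beta$ is an eigendirection of the Hessian of $\Phi$, because $\Phi$ depends only on $\Delta=\beta_1^2-\beta_2^2-\beta_3^2$.
\end{itemize}

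\textbf{Main obstacle.} I expect Step 3 to be the hard part: fixing conventions so that the normalization and the sign of the $\beta_2$ term come out exactly as stated. I will first compute $I(\beta)\beta$ explicitly to extract the eigenvalue. I will then calibrate the bracket $(f,g)=\langle df, I^{-1}dg\rangle$ transported by $J$, and check the result against the stated formula. If the metric eigenvalue leaves an extra factor involving $\sqrt{\Delta}$, I will absorb it into the time parametrization of the metriplectic flow. This is legitimate because only the direction of the flow, transverse to the leaves, matters for the later results.
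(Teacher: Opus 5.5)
Your Step 1 and the $H$-part of Step 2 are correct and match the paper: direct substitution gives $H\equiv-2$, and $dH=0$ gives $\{z,H\}=0$. The gap is in Step 3, which carries the actual content, and the mechanism you describe cannot produce the stated formula.

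\textbf{The sign pattern.} Each sign device you list either flips all three components or flips $\beta_1$ against the pair $(\beta_2,\beta_3)$. This holds for $\alpha_1^*=-\alpha_1$, for the signature $(+,-,-)$ of $\Delta$, and for an overall sign convention of the bracket. So the most you can reach is $\pm\bigl(\beta_1X_1\pm(\beta_2X_2+\beta_3X_3)\bigr)$, with $\beta_2$ and $\beta_3$ always carrying the same sign. The statement has $\beta_1X_1-\beta_2X_2+\beta_3X_3$. Structurally, every ingredient you use is compatible with the rotations $\mathrm{Ad}_{\exp(t\alpha_1)}$ of the $(\beta_2,\beta_3)$-plane, while the reflection $\beta\mapsto(\beta_1,-\beta_2,\beta_3)$ built into the stated formula is not.

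Concretely, $\sqrt{\Delta}$ is homogeneous of degree one, so its Hessian annihilates $\beta$. One then gets $I(\beta)\beta=-\Delta^{-1}D\beta$ with $D=\mathrm{diag}(1,-1,-1)$, so $\beta$ is an eigenvector of $DI(\beta)$, not of $I(\beta)$.
\begin{itemize}
\item Contracting with $\partial z/\partial J_k=X_k$, as in the paper, yields $-\Delta^{-1}\bigl(\beta_1(1+z^2)-2\beta_2z-\beta_3(1-z^2)\bigr)$, which is wrong in the $\beta_3$ term.
\item Going through the trace identification $\mathfrak{sl}^*(2,\mathbb{R})\cong\mathfrak{sl}(2,\mathbb{R})$ instead yields $\Delta^{-1}\bigl(\beta_1(1+z^2)+2\beta_2z+\beta_3(1-z^2)\bigr)$, which is wrong in the $\beta_2$ term.
\end{itemize}
Reaching the stated pattern needs an extra, non-invariant choice of $\eta^{ik}$. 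Neither you nor the paper specifies one; the paper only asserts the outcome of the contraction.

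\textbf{The prefactor.} Your account of $2i$ also fails. The factor $(z-\bar z)^{-2}=-1/(4y^2)$ is real and depends on the point, so it cannot produce a constant $2i$. Moreover, $\omega$ has already been used in $X_k=\{z,J_k\}$. A factor $i$ arises only from the complex structure, that is, by replacing the Hamiltonian field $X_\xi$ with the hyperbolic gradient $\mathrm{grad}\langle J,\xi\rangle=iX_\xi$. Without that step, your output $\sum_k c_kX_k$ is a Killing field tangent to the level sets of $\langle J,c\rangle$, so it is not dissipative at all.

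\textbf{Two further problems.}

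First, you cannot absorb a leftover factor into a reparametrization of time. The theorem asserts an identity between vector fields, and rescaling $t$ changes $(z,S)$ itself. Such a factor does appear if you take the paper's $S=1-\Phi$ literally, since then $I(\beta)^{-1}\nabla_\beta S=(1+\sqrt{\Delta})\,\beta$.

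Second, Steps 2 and 3 use incompatible notions of $S$.
\begin{itemize}
\item On the symplectic manifold $(\mathbb{H},\omega)$ the only Casimirs are constants. So Step 2 forces $dS=0$ on $\mathbb{H}$, and then every symmetric bracket of the form $\langle df,G\,dg\rangle$ on $\mathbb{H}$ gives $(z,S)=0$.
\item Step 3 needs $S$ to vary along $\mathbb{H}$ through $Q\leftrightarrow J$. But then $\{z,S\circ J\}=\sum_k(\partial S/\partial Q_k)X_k\neq0$, and $S$ is no longer a Casimir.
\end{itemize}
You must specify the enlarged space on which both brackets act, for instance one carrying $\beta$ as additional variables. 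Only then can the degeneracy conditions and a nonzero $(z,S)$ hold together.
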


\begin{proof}
By computing the quadratic norm associated with the
pseudo-Riemannian structure on $\mathfrak{sl}^*(2,\mathbb{R})$, we
have:
\begin{equation*}
    H=\frac{1}{2}\left(J^{2}_{2}+J^{2}_{3}-J^{2}_{1}\right)
\end{equation*}
Substituting the explicit expressions of the momentum map components
$J_1$, $J_2$, and $J_3$ for the Poincar\'{e} half-plane, we find:
\begin{equation*}
    H(z,\bar{z}) = \frac{1}{2} \cdot \frac{-4}{(z-\bar{z})^2} \left[ (z+\bar{z})^2 + (1-z\bar{z})^2 - (1+z\bar{z})^2 \right] = -2
\end{equation*}
Furthermore, by definition of the Poisson bracket on $\mathbb{H}$:
\begin{equation*}
\left\{z,H\right\}=\frac{1}{\omega}\left(\frac{\partial z}{\partial
z}\frac{\partial H}{\partial \bar{z}}-\frac{\partial z}{\partial
\bar{z}}\frac{\partial H}{\partial z}\right)
\end{equation*}
which gives us identically
$\left\{z,H\right\}=\frac{\left(z-\bar{z}\right)^{2}}{-2i}\frac{\partial
H}{\partial \bar{z}} = 0.$

Let us consider the total generating function $F(z,\bar{z},\beta)=
H(z,\bar{z})+S(\eta)$, which yields:
\begin{eqnarray*}
  \frac{dz}{dt}&=&\left\{z,F\right\}+\left(z,S\right)= \left\{z,H\right\}+\left\{H,S\right\}+\left(z,H\right)+\left(z,S\right)
\end{eqnarray*}
Since $\left\{H,S\right\}=0$ and $\left(z,H\right)=0$ because $H$
and $\Psi$ are Casimir invariants, we have:
\begin{eqnarray*}
  \frac{dz}{dt}&=& \left\{z,H\right\}+\left(z,S\right) = \left(z,S\right)
\end{eqnarray*}
We know that the metric dissipation bracket is defined by
$\left(z,S\right)=-\sum_{i,k=1}^{3}\eta^{ik}\frac{\partial
z}{\partial J_{i}}\frac{\partial S}{\partial J_{k}}$. Moreover, the
fundamental derivatives matching the infinitesimal generators are
given by:
\begin{equation*}
\frac{\partial z}{\partial
J_{i}}=\left\{z,J_{i}\right\}=\frac{\left(z-\bar{z}\right)^{2}}{-2i}\frac{\partial
J_{i}}{\partial \bar{z}},\qquad i=1,2,3.
\end{equation*}
which evaluates precisely to the fundamental vector fields:
\begin{eqnarray*}
  \frac{\partial z}{\partial J_{1}} &=& 1+z^2, \quad
  \frac{\partial z}{\partial J_{2}} = 2z, \quad
  \frac{\partial z}{\partial J_{3}} = 1-z^2
\end{eqnarray*}

Contracting these terms with the thermodynamic components through
the inverse Koszul metric tensor yields:
\begin{equation*}
\left(z,S\right)=\frac{2i}{\beta^{2}_{1}-\beta^{2}_{2}-
\beta^{2}_{3}}\left(\beta_{1}\left(1+z^{2}\right)-2\beta_{2}z+
\beta_{3}\left(1-z^{2}\right)\right).
\end{equation*}
Thus, we obtain the non-equilibrium dissipative path equation:
\begin{equation*}
    \frac{dz}{dt}=\frac{2i}{\beta^{2}_{1}-\beta^{2}_{2}-\beta^{2}_{3}}\left(\beta_{1}\left(1+z^{2}\right) - 2\beta_{2}z + \beta_{3}\left(1-z^{2}\right)\right)
\end{equation*}
\end{proof}

\section{Libermann Foliations and Coadjoint Orbit Duality}\label{sec5}

Based on the explicit equivariant momentum map $J: \mathbb{H}
\rightarrow \mathfrak{sl}^*(2,\mathbb{R})$ obtained in Theorem
\ref{tho1} and the corresponding metriplectic dissipation flow
investigated in Theorem \ref{tho2}, we can now state a fundamental
structural result establishing the connection with Libermann's
theory of symplectically complete foliations.

\begin{theorem}\label{tho3}
Let $(\mathbb{H},\omega)$ be the Poincar\'{e} half-plane equipped
with the K\"{a}hler symplectic form, and let $J: \mathbb{H}
\rightarrow \mathfrak{sl}^*(2,\mathbb{R})$ be the equivariant
momentum map.
\begin{enumerate}
    \item The level sets of the momentum map $J$ (or equivalently, the level sets of the quadratic Casimir Hamiltonian $H(z,\bar{z}) = -2$) define a regular Libermann foliation $\mathcal{F}_L$ on $\mathbb{H}$, whose leaves are exactly the coadjoint orbits of $SL(2,\mathbb{R})$ embedded in $\mathfrak{sl}^*(2,\mathbb{R})$.
    \item The non-equilibrium dissipative vector field $X_{S} = (z, S)$ is strictly transverse to the Libermann foliation $\mathcal{F}_L$. Consequently, the metriplectic flow drives the thermodynamic system across the Libermann leaves, generating an entropy-producing path.
\end{enumerate}
\end{theorem}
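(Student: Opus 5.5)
The plan is to reduce both parts to two facts: $SL(2,\mathbb{R})$ acts transitively on $\mathbb{H}$, and the momentum map $J$ of the first theorem of Section~\ref{sec3} is equivariant. Part 1 then becomes a statement about orbits. Part 2 becomes a nonvanishing question for the explicit field of Theorem~\ref{tho2}, which I will settle with a discriminant computation.

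\textbf{Part 1.} By equivariance, $J(g\cdot z)=\mathrm{Ad}^*_g J(z)$. Since the Möbius action is transitive on $\mathbb{H}$, the image $J(\mathbb{H})$ is a single coadjoint orbit $\mathcal{O}$. By Theorem~\ref{tho2} this orbit lies in the quadric $H=\frac12(J_2^2+J_3^2-J_1^2)=-2$. Because $J_1=4(1+|z|^2)/(2\,\mathrm{Im}z)\cdot\frac12>0$, it is exactly the upper sheet of that two-sheeted hyperboloid.

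Next I will show that $J$ is injective by inverting the explicit formulas. With $z=x+iy$ one has $J_2=-2x/y$, and $J_1,J_3$ are rational in $(x,y)$, so $y$ and then $x$ are recovered from $(J_1,J_2,J_3)$. Consequently $J:\mathbb{H}\to\mathcal{O}$ is an equivariant diffeomorphism, and indeed a symplectomorphism onto the KKS orbit, since $i_{X_k}\omega=dJ_k$.

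It follows that each fibre $J^{-1}(\xi)$ is a single point. The level-set foliation $\mathcal{F}_L$ is therefore the regular $0$-dimensional foliation by points, and this is the ``point-like sheet'' geometry announced in the introduction. The Libermann condition is automatic: $C^\infty(\mathbb{H}/\mathcal{F}_L)=C^\infty(\mathbb{H})$, which is closed under $\{\cdot,\cdot\}$. This is also the symplectically complete case, since the orthogonal foliation $\mathcal{F}_L^{\perp\omega}$ is the single leaf $\mathbb{H}\cong\mathcal{O}$.

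I will read the phrase ``leaves are exactly the coadjoint orbits'' through this duality. The points of $\mathbb{H}$ are the fibres, and their symplectic orthogonal is the coadjoint orbit. The level set $\{H=-2\}$ is all of $\mathbb{H}$, and it is identified with $\mathcal{O}$ via $J$.

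\textbf{Part 2.} For a foliation by points, transversality of $X_S$ at $z$ just means $X_S(z)\neq 0$. From Theorem~\ref{tho2}, $X_S$ is a nonzero constant multiple of
\[
p_\beta(z)=(\beta_1-\beta_3)z^2-2\beta_2 z+(\beta_1+\beta_3).
\]
This is the fundamental field $\beta_1X_1-\beta_2X_2+\beta_3X_3$, multiplied by $i$. Its discriminant is $-4(\beta_1^2-\beta_2^2-\beta_3^2)<0$ on $\Omega$, so $p_\beta$ has no real zeros. It has a conjugate pair of complex zeros $z_\pm=(\beta_2\pm i\sqrt{\beta_1^2-\beta_2^2-\beta_3^2})/(\beta_1-\beta_3)$ (note $\beta_1\neq\beta_3$ on $\Omega$), and exactly one of them lies in $\mathbb{H}$. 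That zero is the unique fixed point $z_\beta$ of the elliptic one-parameter subgroup $\exp(t\beta)$.

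\textbf{Main obstacle.} The zero $z_\beta$ is where the argument will need care. As stated, ``strictly transverse'' holds only on $\mathbb{H}\setminus\{z_\beta\}$. At $z_\beta$ the dissipative flow stops: it is the equilibrium, the maximum of the Gibbs density $p$ of Theorem~\ref{tho1}. I will prove transversality on $\mathbb{H}\setminus\{z_\beta\}$ and identify $z_\beta$ as the unique stationary point.

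\textbf{Entropy production.} I will write $X_S=i\,X_\beta$, i.e.\ the complex structure applied to a Killing field. This is, up to sign, the Poincaré-metric gradient of $\langle J,\beta\rangle$. Hence $\frac{d}{dt}\langle J,\beta\rangle$ has a definite sign, equal to $\pm|X_\beta|^2_{ds^2}$, and it vanishes only at $z_\beta$. Since $-\ln p=\langle J,\beta\rangle-\Phi(\beta)$, the log-likelihood is strictly monotone along nonstationary trajectories. This gives the entropy-producing path, and it breaks the Noether invariance of $J$ because $dJ(X_S)\neq0$ off $z_\beta$. The sign convention in Theorem~\ref{tho2} will have to be checked to fix the direction of monotonicity.
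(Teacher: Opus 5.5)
Your core argument is sound, and it takes a cleaner route than the paper's.

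\textbf{Part 1.} The paper checks $J(g\cdot z)=gJ(z)g^{-1}$ by brute-force matrix multiplication. It then infers Libermann completeness from the relations $\{J_1,J_2\}=2J_3$, etc., and solves $J(x+iy)=\xi$ to see that each fibre is a point. You instead use transitivity plus explicit inversion ($y=-2/(J_1+J_3)$, $x=J_2/(J_1+J_3)$). You also get the Libermann condition for free from $C^\infty(\mathbb{H}/\mathcal{F}_L)=C^\infty(\mathbb{H})$. Your reading of ``leaves are coadjoint orbits'' through the symplectic orthogonal is the only coherent one.

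Two small repairs are needed. First, the first theorem's proof never verifies equivariance; the paper does it inside this proof, so you should include it. A short way is $J(z)=\frac{2i}{z-\bar z}(v\bar v^{T}+\bar v v^{T})\alpha_1$ with $v=(z,1)^{T}$, together with $gv=(cz+d)(g\cdot z,1)^{T}$ and $\alpha_1 g^{-1}=g^{T}\alpha_1$. Second, $J_1=-(1+|z|^2)/\mathrm{Im}\,z<0$, so the image is the sheet $J_1\le -2$, not the upper one.

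\textbf{Part 2, transversality.} The paper only asserts $dJ_k(dz/dt)\neq 0$ ``for $\beta\notin$ equilibrium'', after a heuristic about leaving the orbit. Your discriminant computation is an actual proof, and it shows the statement is false as written. For every $\beta\in\Omega$ the field has exactly one zero in $\mathbb{H}$, so no condition on $\beta$ removes it. Non-tangency holds exactly on $\mathbb{H}\setminus\{z_\beta\}$, and non-tangency is the only meaningful notion of transversality to a point foliation of a surface.

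\textbf{Part 2, entropy step.} This step contains an error that is more than a sign convention. With the paper's pairing $\frac12\mathrm{Tr}$, the basis $\{\alpha_i^*\}$ is dual to $\{\alpha_i\}$. So $\langle J,\beta\rangle=\sum_k\beta_kJ_k$ has Hamiltonian field $X_\beta:=\sum_k\beta_kX_k=\beta_1(1+z^2)+2\beta_2z+\beta_3(1-z^2)$. The field of Theorem~\ref{tho2} has $-2\beta_2 z$ instead. You wrote it correctly as $\beta_1X_1-\beta_2X_2+\beta_3X_3$, but that is the fundamental field $X_{\tilde\beta}$ of $\tilde\beta=\beta_1\alpha_1-\beta_2\alpha_2+\beta_3\alpha_3$, not of $\beta$.

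This has three consequences:
\begin{itemize}
\item $X_S=\frac{2i}{q}X_{\tilde\beta}$ is the Poincar\'e gradient of $\frac{2}{q}\langle J,\tilde\beta\rangle$, with $q=\beta_1^2-\beta_2^2-\beta_3^2$.
\item $z_\beta$ is the fixed point of $\exp(t\tilde\beta)$, not of $\exp(t\beta)$.
\item $z_\beta$ is the mirror image under $z\mapsto-\bar z$ of the critical point of $\langle J,\beta\rangle$, where $p$ is extremal. The two points agree only when $\beta_2=0$.
\end{itemize}

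Worse, $\frac{d}{dt}\langle J,\beta\rangle=\omega(X_\beta,X_S)=\frac{2}{q\,y^2}\,\mathrm{Re}\bigl(X_\beta\overline{X_{\tilde\beta}}\bigr)$. For $\beta_3=0$ this equals $\frac{2}{q\,y^2}\bigl(\beta_1^2|1+z^2|^2-4\beta_2^2|z|^2\bigr)$. It is $-8\beta_2^2/q<0$ at $z=i$ and positive for large $|z|$. So $-\ln p$ is not monotone along the flow when $\beta_2\neq0$.

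There are two ways to fix this. You can use $\langle J,\tilde\beta\rangle$ as the Lyapunov function: $\frac{d}{dt}\langle J,\tilde\beta\rangle=\frac{2}{q\,y^2}|X_{\tilde\beta}|^2$, which is strictly positive off $z_\beta$. Alternatively, treat the $-2\beta_2 z$ in Theorem~\ref{tho2} as a typo for $+2\beta_2 z$. Then your identification of $z_\beta$ with the Gibbs extremum goes through, up to the direction of monotonicity you already flagged.
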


\begin{proof}
1. By definition, a foliation is called a Libermann foliation (or
symplectically complete) if the functions defining its leaves form a
complete system of first integrals, meaning that the Poisson bracket
of any two first integrals remains a function of the first
integrals. Let $\{J_1, J_2, J_3\}$ be the components of the momentum
map $J$ calculated in Theorem \ref{tho1}.

To establish that $J$ defines a regular Libermann foliation, we
first show that it intertwines the homographic action of $G =
SL(2,\mathbb{R})$ on the Poincar\'{e} half-plane with its coadjoint
action on the dual space:
\begin{equation*}
    J(g \cdot z) = \mathrm{Ad}^*_g \big(J(z)\big) = g J(z) g^{-1}, \quad \forall g \in SL(2,\mathbb{R}), \; z \in \mathbb{H}
\end{equation*}
Let $g = \begin{pmatrix} a & b \\ c & d \end{pmatrix} \in
SL(2,\mathbb{R})$ with $ad - bc = 1$. The action on $\mathbb{H}$ is
given by $g \cdot z = \frac{az+b}{cz+d}$. Recall the matrix
expression of the momentum map derived for the half-plane geometry:
\begin{equation*}
    J(z,\bar{z}) = \frac{2i}{z-\bar{z}} \begin{pmatrix} -(z+\bar{z}) & 2z\bar{z} \\ -2 & z+\bar{z} \end{pmatrix}
\end{equation*}
First, we evaluate the transformation of the pre-factor under the
action of $g$:
\begin{equation*}
    g\cdot z - \overline{g\cdot z} = \frac{az+b}{cz+d} - \frac{a\bar{z}+b}{c\bar{z}+d} = \frac{(az+b)(c\bar{z}+d) - (a\bar{z}+b)(cz+d)}{(cz+d)(c\bar{z}+d)}
\end{equation*}
Expanding the numerator yields $(ad-bc)(z-\bar{z}) = z-\bar{z}$.
Hence, the transformed pre-factor satisfies:
\begin{equation*}
    \frac{2i}{g\cdot z - \overline{g\cdot z}} = \frac{2i}{z-\bar{z}} (cz+d)(c\bar{z}+d)
\end{equation*}
Substituting $g\cdot z$ into the components of the matrix $J$, we
obtain:
\begin{align*}
    J(g\cdot z) &= \frac{2i}{z-\bar{z}} (cz+d)(c\bar{z}+d) \begin{pmatrix} -\left(\frac{az+b}{cz+d} + \frac{a\bar{z}+b}{c\bar{z}+d}\right) & 2\frac{(az+b)(a\bar{z}+b)}{(cz+d)(c\bar{z}+d)} \\ -2 & \frac{az+b}{cz+d} + \frac{a\bar{z}+b}{c\bar{z}+d} \end{pmatrix} \\
    &= \frac{2i}{z-\bar{z}} \begin{pmatrix} -(az+b)(c\bar{z}+d) - (a\bar{z}+b)(cz+d) & 2(az+b)(a\bar{z}+b) \\ -2(cz+d)(c\bar{z}+d) & (az+b)(c\bar{z}+d) + (a\bar{z}+b)(cz+d) \end{pmatrix}
\end{align*}
Conversely, evaluating the gauge transformation $g J(z) g^{-1}$ with
$g^{-1} = \begin{pmatrix} d & -b \\ -c & a \end{pmatrix}$ yields:
\begin{align*}
    g J(z) g^{-1} &= \frac{2i}{z-\bar{z}} \begin{pmatrix} a & b \\ c & d \end{pmatrix} \begin{pmatrix} -(z+\bar{z}) & 2z\bar{z} \\ -2 & z+\bar{z} \end{pmatrix} \begin{pmatrix} d & -b \\ -c & a \end{pmatrix} \\
    &= \frac{2i}{z-\bar{z}} \begin{pmatrix} -a(z+\bar{z}) - 2b & 2az\bar{z} + b(z+\bar{z}) \\ -c(z+\bar{z}) - 2d & 2cz\bar{z} + d(z+\bar{z}) \end{pmatrix} \begin{pmatrix} d & -b \\ -c & a \end{pmatrix}
\end{align*}
Performing the final matrix multiplication and collecting terms
under the condition $ad-bc=1$ results exactly in the same matrix
components as $J(g\cdot z)$. Thus, the strong equivariance of the
Poincar\'{e} half-plane momentum map is verified. Since $J$ is
equivariant with respect to the coadjoint action of
$SL(2,\mathbb{R})$, its components satisfy the commutation relations
of the Lie algebra $\mathfrak{sl}(2,\mathbb{R})$ under the Poisson
bracket induced by $\omega$:
\begin{equation*}
    \{J_1, J_2\} = 2J_3, \quad \{J_2, J_3\} = 2J_1, \quad \{J_3, J_1\} = -2J_2
\end{equation*}
Because the closure of these brackets under addition and
multiplication remains entirely within the algebra generated by the
components of $J$, they form a closed, complete system of first
integrals in the sense of Libermann. The level sets $J^{-1}(\xi)$
define the geometric leaves of the foliation $\mathcal{F}_L$. Let
$\xi = \begin{pmatrix} -\xi_2 & \xi_1 + \xi_3 \\ \xi_1 - \xi_3 &
\xi_2 \end{pmatrix} \in \mathfrak{sl}^*(2,\mathbb{R})$ be a fixed
dual element. Expressing $z = x+iy \in \mathbb{H}$ (with $y > 0$),
the equation $J(z,\bar{z}) = \xi$ becomes:
\begin{equation*}
    \frac{1}{y} \begin{pmatrix} -2x & x^2+y^2 \\ -2 & 2x \end{pmatrix} = \begin{pmatrix} -\xi_2 & \xi_1 + \xi_3 \\ \xi_1 - \xi_3 & \xi_2 \end{pmatrix}
\end{equation*}
Equating the individual matrix coefficients leads to the following
system:
\begin{equation*}
    \begin{cases}
        \frac{2x}{y} = \xi_2 \\
        \frac{2}{y} = \xi_3 - \xi_1 \\
        \frac{x^2+y^2}{y} = \xi_1 + \xi_3
    \end{cases}
\end{equation*}
From the second equation, since $y > 0$, a leaf exists if and only
if $\xi$ satisfies the constraint $\xi_3 - \xi_1 > 0$. Solving this
system directly provides the explicit parameterization of
$J^{-1}(\xi)$:
\begin{equation*}
    x = \frac{\xi_2}{\xi_3 - \xi_1}, \qquad y = \frac{2}{\xi_3 - \xi_1}
\end{equation*}
Because $(\mathbb{H}, \omega)$ is a 2-dimensional manifold, its
image under $J$ is restricted to a single 2-dimensional hyperbolic
sub-manifold of $\mathfrak{sl}^*(2,\mathbb{R})$ defined by the
constant Casimir value $H = \frac{1}{2}(J_2^2 + J_3^2 - J_1^2) =
-2$. For each admissible coordinate vector $\xi$ on this hyperboloid
sheet, the pre-image $J^{-1}(\xi)$ collapses into a unique point
$(x,y) \in \mathbb{H}$. The regular Libermann foliation
$\mathcal{F}_L$ on the Poincar\'{e} half-plane is therefore a
regular foliation by points. The level sets $J^{-1}(\xi)$ for $\xi
\in \mathfrak{sl}^*(2,\mathbb{R})$ therefore define the leaves of
the regular Libermann foliation $\mathcal{F}_L$. Geometrically,
since $H = \frac{1}{2}(J_2^2 + J_3^2 - J_1^2) = -2$ is a constant on
$\mathbb{H}$, the image $J(\mathbb{H})$ is entirely contained within
a single leaf of the coadjoint foliation of
$\mathfrak{sl}^*(2,\mathbb{R})$ (hyperboloid of two sheets).

2. To prove the transversality of the dissipative component, we
evaluate the directional derivative of the Casimir Hamiltonian $H$
along the dissipative vector field $X_{S} = (z, S)$. By definition
of the metriplectic bracket:
\begin{equation*}
    X_{S}(H) = (H, S) = -\sum_{i,k=1}^{3}\eta^{ik}\frac{\partial H}{\partial J_{i}}\frac{\partial S}{\partial J_{k}}
\end{equation*}
Since $H = -2$ on the entire domain $\mathbb{H}$, its partial
derivatives with respect to the coordinates vanish identically
($\frac{\partial H}{\partial z} = 0$). However, if we consider a
perturbation off the equilibrium orbit into the broader
thermodynamic space parameterized by $\beta \in \Omega$, the Koszul
metric tensor $\eta^{ik}$ governs the motion. The vector field
$\left(z,S\right)$ acts orthogonally to the Hamiltonian vector
fields $X_k = \{z, J_k\}$ due to the structural properties of the
metric bracket. Historically, the symplectic leaf $\mathcal{F}_L$
represents the sub-manifold where no dissipation occurs. The
introduction of the non-zero Koszul form through the potential
$\Phi(\beta)$ forces the trajectories $\frac{dz}{dt} = (z, S)$ to
satisfy:
\begin{equation*}
    dJ_k\left(\frac{dz}{dt}\right) \neq 0 \quad \text{for } \beta \notin \text{equilibrium}
\end{equation*}
This explicitly breaks the conservation of the individual $J_k$
components, driving the complex coordinate $z(t)$ orthogonally
across the level sets of $J$. Thus, the Libermann foliation
characterizes the conservative boundaries, while the Koszul
structure dictates the transverse dissipative dynamics.
\end{proof}

\section{Berezin Quantization on Para-K\"{a}hler Symmetric Spaces}\label{sec6}

To extend the geometric dissipation framework to the quantum case,
we establish the Berezin quantization on the para-K\"{a}hler
symmetric space $(M, \omega, J)$ associated with the semi-simple Lie
group $G = SL(2, \mathbb{R})$. Here, the thermodynamic potential
$\Phi(\beta)$ derived from the Koszul structure acts as the
generating weight for the quantum Hilbert space inner product.

\begin{theorem}\label{thm:berezin_koszul}
Let $M \cong SL(2, \mathbb{R})/SO(1,1)$ be the para-K\"{a}hler
symmetric space equipped with the split-complex structure $J$ ($J^2
= \mathrm{Id}$) and the KKS symplectic form $\omega$, and
$\mathcal{H}_k(M) = \left\{ f \in C^\infty(M, \mathbb{C})
\;\middle|\; \bar{\partial}_{\mathcal{F}_2} f = 0, \;
\int_{\mathbb{H}} |f(z)|^2 e^{-k \Phi(J(z, \bar{z}))} \omega^n <
\infty \right\}$ a canonical Hilbert space. Let $\Phi(\beta)$ be the
Koszul potential function defined on the stable open cone $\Omega
\subset \mathfrak{sl}(2, \mathbb{R})$.
\begin{enumerate}
    \item For each quantum parameter $\hbar = 1/k$ ($k \in \mathbb{N}^*$), the Koszul potential induces a
     canonical Hilbert space $\mathcal{H}_k(M)$ of para-holomorphic functions square-integrable with respect to the weighted Liouville measure:
    \begin{equation}
    d\mu_k(z, \bar{z}) = e^{-k \Phi(J(z, \bar{z}))} \omega^n
    \end{equation}
    where $J(z, \bar{z})$ is the equivariant momentum map on the Poincar\'{e} half-plane $\mathbb{H}$.
    \item The corresponding Berezin reproducer (or Bergman-Koszul kernel) $K_k(z, \bar{z})$ satisfies the asymptotic expansion as $k \to \infty$:
    \begin{equation}
    K_k(z, \bar{z}) = \left( \frac{k}{2\pi} \right)^n \det \left( \frac{\partial^2 \Phi}{\partial \beta_i \partial \beta_j} \right)^{-1/2} (1 + \mathcal{O}(k^{-1}))
    \end{equation}
    establishing that the quantum density of states is directly governed by the inverse square root of the Fisher-Souriau information metric determinant.
\end{enumerate}
\end{theorem}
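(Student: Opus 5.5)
We will prove the two items separately. Item 1 comes from checking that the weighted measure is well defined and that the para-holomorphic $L^2$ space it defines is closed. Item 2 comes from a Laplace-type (stationary phase) analysis of the Bergman projector in the limit $k\to\infty$.

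For item 1 we begin from the bi-Lagrangian splitting $TM=T\mathcal{F}_1\oplus T\mathcal{F}_2$ induced by the para-complex structure, with $J^2=\mathrm{Id}$ and eigendistributions for the eigenvalues $\pm1$. In para-complex null coordinates $(u,v)$ adapted to the two leaves, the condition $\bar{\partial}_{\mathcal{F}_2}f=0$ means $f$ is constant along $\mathcal{F}_2$. Next we check that the weight $e^{-k\Phi(J(z,\bar z))}$ is a well-defined positive smooth function. This needs $J(z,\bar z)$ to lie in the domain $\Omega$ of $\Phi$, where $\Phi$ is the explicit potential of Theorem \ref{tho1}. We will use the identification of $\mathfrak{sl}^*(2,\mathbb{R})$ with $\mathfrak{sl}(2,\mathbb{R})$ given by the pairing $\frac12\mathrm{Tr}$. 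By the Casimir computation of Theorem \ref{tho2}, $J(\mathbb{H})$ lies on the hyperboloid $J_1^2-J_2^2-J_3^2=4>0$, so $J(\mathbb{H})\subset\Omega$ after restricting to the correct sheet. We also use that $\Phi$ is $SL(2,\mathbb{R})$-invariant, being a function of the invariant quadratic form only, and that $J$ is equivariant by Theorem \ref{tho3}. Together these make the measure $d\mu_k$ invariant.

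Completeness of $\mathcal{H}_k(M)$ follows the standard Bergman argument. Point evaluations are continuous on the weighted $L^2$ space by a local mean-value estimate along $\mathcal{F}_1$, with the weight bounded above and below on compacts. Hence the space is a closed subspace of $L^2(d\mu_k)$, and it carries a reproducing kernel $K_k$. The diagonal of $K_k$ is what item 2 concerns.

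For item 2 we follow the Berezin / Tian--Catlin--Zelditch strategy. First we write a local model kernel $e^{k\Psi(z,\bar w)}$, where $\Psi$ is the polarization (almost para-analytic extension) of the weight function $\Phi\circ J$. Then we compute $\int K_k(z,w)f(w)\,d\mu_k(w)$ by Laplace's method at the nondegenerate critical point $w=z$. The Hessian of the phase at this point is obtained by the chain rule:
\begin{equation*}
\partial\bar\partial(\Phi\circ J)=dJ^{\top}\,\Big(\frac{\partial^2\Phi}{\partial\beta_i\partial\beta_j}\Big)\,dJ ,
\end{equation*}
which is the pullback of the Fisher--Souriau matrix $I(\beta)$ from Theorem \ref{tho1}. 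The Gaussian integral yields the factor $(2\pi/k)^n\det(\cdot)^{-1/2}$. Requiring the reproducing identity to hold to leading order then forces the diagonal value to be $(k/2\pi)^n\det(\partial^2\Phi)^{-1/2}(1+\mathcal{O}(k^{-1}))$. The $\mathcal{O}(k^{-1})$ remainder comes from the next Laplace term. Homogeneity of $M$ makes this remainder uniform in $z$.

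The main obstacle is exactly this Laplace step. Because the structure is para-Kähler, the phase is real-split rather than positive-definite. The "Gaussian" is then an oscillatory or hyperbolic integral, and its signature must be controlled. In addition, the $3\times3$ Hessian of $\Phi$ must be reconciled with its $2n=2$-dimensional pullback to $M$. To handle this, we will first try to perform the computation in the null coordinates, where the signature is $(1,1)$, treating one direction as a genuine Gaussian and the other by stationary phase. We will then identify the resulting determinant with the determinant of $I(\beta)$ restricted to the tangent plane of the orbit, interpreting the $\det$ in the statement in that restricted sense.
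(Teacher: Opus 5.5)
The step that fails is the Laplace step, and the reason is already in your item 1. You note that $\Phi$ depends only on $q=\beta_1^2-\beta_2^2-\beta_3^2$ and that $q\circ J\equiv 4$ on $\mathbb{H}$. Equivalently, $\Phi$ is invariant, $J$ is equivariant, and $SL(2,\mathbb{R})$ is transitive on $\mathbb{H}$. Hence $\Phi\circ J\equiv 2-\ln\pi+C$ is constant, $d\mu_k$ is a constant multiple of $\omega$, and the phase has no nondegenerate critical point to localize at.

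Your chain rule drops the first-order term: $\mathrm{Hess}(\Phi\circ J)=dJ^{\top}(\partial^2\Phi)\,dJ+\sum_i\partial_{\beta_i}\Phi\,\mathrm{Hess}(J_i)$, and the two pieces cancel identically. For instance, at $z=i$ we have $J=(-2,0,0)$ and $\partial^2\Phi=\mathrm{diag}(-\tfrac14,-\tfrac34,-\tfrac34)$. One gets $dJ^{\top}(\partial^2\Phi)\,dJ=-3(dx^2+dy^2)$ and $\partial_{\beta_1}\Phi\,\mathrm{Hess}(J_1)=(-\tfrac32)(-2)(dx^2+dy^2)=3(dx^2+dy^2)$.

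The same example shows that $\partial^2\Phi$ is negative definite, and by Lorentz invariance this holds on all of $\Omega$. So its $3\times3$ determinant is negative and $\det(\partial^2\Phi)^{-1/2}$ is not even real. Your remedy for an indefinite phase does not work either. The exponent is real, so in a negative direction the integrand grows like $e^{+ck|\delta|^2}$ instead of oscillating; stationary phase requires $e^{ik\psi}$. Finally, even granting a Gaussian factor $(2\pi/k)^n\det(\cdot)^{-1/2}$, inverting it through the reproducing identity yields $\det(\cdot)^{+1/2}$, not the exponent $-1/2$ you claim.

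Item 1 has a gap of the same kind. The condition $\bar\partial_{\mathcal{F}_2}f=0$ is one real first-order equation ($\partial_xf=0$ in the paper's chart, $\partial_vf=0$ in null coordinates), not an elliptic system. So $f$ is an arbitrary smooth function of the transverse variable, there is no mean-value estimate along $\mathcal{F}_1$, and point evaluations are not bounded. Worse, with a constant weight the $L^2$ condition forces $f=0$. In the paper's chart, $f=f(y)$ and $\int_{\mathbb{H}}|f(y)|^2\,dx\,dy/y^2=\infty$ unless $f\equiv0$. On $SL(2,\mathbb{R})/SO(1,1)$ in null coordinates, $\omega=c\,du\wedge dv/(u-v)^2$ and $f=f(u)$, and $\int dv/(u-v)^2$ diverges at the diagonal. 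So $\mathcal{H}_k(M)=\{0\}$ and $K_k\equiv0$, and the nonzero asymptotics of item 2 cannot come out of any argument along these lines.

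You are also silently using $\mathbb{H}=SL(2,\mathbb{R})/SO(2)$ for the weight and $M$ for the polarization. But $\mathbb{H}$ has no invariant para-complex structure, and the momentum image of $SL(2,\mathbb{R})/SO(1,1)$ lies in $\{q<0\}$, where $\Phi$ is undefined.

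The paper's proof follows essentially your outline: $\partial_xf=0$, decay attributed to convexity of $\Phi$, Taylor expansion of $\Phi(J(z))$ about $J(z_0)$ with Hessian $I(\beta)$, Gaussian integral, inversion. It does not resolve any of this. Its normalization integral $\int_{\mathbb{H}}e^{-k[\Phi(J(z))-\Phi(\beta)]}\omega^n$ is literally $\int_{\mathbb{H}}\omega=\infty$. It reaches the exponent $-1/2$ only by writing that integral as $(2\pi/k)^n(\det I)^{1/2}$. The obstacles you flagged are real, but the basic problem is that the Koszul weight is constant on the orbit and carries no information.
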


\begin{proof}
1. The para-K\"{a}hler symmetric space $M \cong SL(2,
\mathbb{R})/SO(1,1)$ is equipped with an integrable split-complex
structure $J \in \text{End}(TM)$ satisfying $J^2 = \mathrm{Id}_M$.
The real tangent bundle $TM$ decomposes uniquely into a direct sum
of the $+1$ and $-1$ eigenbundles of $J$:
\begin{equation}
TM = T\mathcal{F}_1 \oplus T\mathcal{F}_2 = T^+M \oplus T^-M
\end{equation}
where $T\mathcal{F}_2 = T^-M$ represents the subbundle defining the
second foliation $\mathcal{F}_2$. For any point $\beta = (\beta_1,
\beta_2, \beta_3) \in M \subset \mathfrak{sl}(2,\mathbb{R})^*$, the
fiber $T_\beta\mathcal{F}_2$ is defined algebraically as the kernel:
\begin{equation}
T_\beta\mathcal{F}_2 = \ker\left(J_\beta + \mathrm{Id}\right) = \{
\xi \in T_\beta M \mid J_\beta(\xi) = -\xi \}
\end{equation}
By parameterizing the symmetric space $M$ via the hyperbolic
coordinates $(x, y)$ of the Poincar\'{e} upper half-plane
$\mathbb{H} = \{z = x + iy \in \mathbb{C} \mid y > 0\}$, the KKS
symplectic form $\omega$ is given by:
\begin{equation}
\omega = \frac{\mathrm{d}x \wedge \mathrm{d}y}{y^2}
\end{equation}
In this explicit local chart, the split-complex structure $J$ acts
on the standard coordinate basis vector fields by swapping and
scaling according to the para-complex signature:
\begin{equation}
J\left(\frac{\partial}{\partial x}\right) =
-\frac{\partial}{\partial x}, \quad J\left(\frac{\partial}{\partial
y}\right) = \frac{\partial}{\partial y}
\end{equation}
Consequently, the tangent subbundles to the bi-Lagrangian web (or
Libermann's symplectically complete foliations) are spanned by:
\begin{equation}
T\mathcal{F}_1 = \text{Vect}\left(\frac{\partial}{\partial
y}\right), \quad T\mathcal{F}_2 =
\text{Vect}\left(\frac{\partial}{\partial x}\right)
\end{equation}
The leaves of $\mathcal{F}_2$ are thus given explicitly by the
horizontal lines $y = \text{constant}$, while the leaves of
$\mathcal{F}_1$ are given by the vertical lines $x =
\text{constant}$. Both distributions are immediately seen to be
maximal isotropic since:
\begin{equation}
\omega\left(\frac{\partial}{\partial x}, \frac{\partial}{\partial
x}\right) = 0 \quad \text{and} \quad
\omega\left(\frac{\partial}{\partial y}, \frac{\partial}{\partial
y}\right) = 0
\end{equation}
Under this canonical geometric frame, the flat connection along the
leaves of $\mathcal{F}_2$ is characterized by the flat
para-holomorphic exterior derivative
$\bar{\partial}_{\mathcal{F}_2}$. For any smooth complex-valued
function $f \in C^\infty(M, \mathbb{C})$, its restriction to the
$T^-M$ leaves reduces to:
\begin{equation}
\bar{\partial}_{\mathcal{F}_2} f = \mathrm{d}f\vert_{T\mathcal{F}_2}
= \frac{\partial f}{\partial x} \mathrm{d}x
\end{equation}
Therefore, the para-holomorphie condition
$\bar{\partial}_{\mathcal{F}_2} f = 0$ required in Equation (23)
yields the explicit differential constraint:
\begin{equation}
\frac{\partial f}{\partial x} = 0
\end{equation}
This establishes that the functions $f(z)$ belonging to the closed
quantum Hilbert space $\mathcal{H}_k(M)$ are strictly independent of
the horizontal coordinate $x$ and depend solely on the transverse
parameter $y$ (the statistical temperature variable $\beta$
associated with the coadjoint orbit), validating the K\"{u}nneth
decomposition topology of Libermann's framework. By definition of
the para-K\"{a}hler structure, the tangent bundle splits into the
$\pm 1$ eigenspaces of $J$, denoting $TM = T^+M \oplus
T^-M=T\mathcal{F}_1 \oplus T\mathcal{F}_2$, which defines a
bi-Lagrangian web or Libermann structure. We define the space of
para-holomorphic sections $\mathcal{H}_k(M)$ as the kernel of the
flat connection along the $T^-M$ leaves, specialized under the
weight of the Koszul potential:
\begin{equation}
\mathcal{H}_k(M) = \left\{ f \in C^\infty(M, \mathbb{C})
\;\middle|\; \bar{\partial}_{\mathcal{F}_2} f = 0, \;
\int_{\mathbb{H}} |f(z)|^2 e^{-k \Phi(J(z, \bar{z}))} \omega^n <
\infty \right\}
\end{equation}
Recall from Theorem \ref{tho1} that the thermodynamic potential
$\Phi(\beta)$ defined on the stable open cone $\Omega =
\left\{\beta\in\mathfrak{sl}(2,\mathbb{R}) \mid
\beta^{2}_{1}-\beta^{2}_{2}-\beta^{2}_{3}>0\right\}$ is explicitly
given by:
\begin{equation}
\Phi(\beta) = -\ln(2\pi) +
\sqrt{\beta^{2}_{1}-\beta^{2}_{2}-\beta^{2}_{3}} +
\frac{1}{2}\ln(\beta^{2}_{1}-\beta^{2}_{2}-\beta^{2}_{3}) + C
\end{equation}
where $C \in \mathbb{R}$. Evaluating this potential on the
equivariant momentum map $\beta = J(z, \bar{z})$, the convexity of
$\Phi$ on the cone $\Omega$ ensures that the weighted Liouville
volume element $e^{-k \Phi(J(z, \bar{z}))} \omega^n$ forms a
strictly positive and exponentially decreasing density. This
guarantees that the evaluation functionals are continuous and that
$\mathcal{H}_k(M)$ is a closed, well-defined Bergman-type Hilbert
space.\newline

2. Let $\{\psi_m\}_{m=0}^\infty$ be an orthonormal basis of the
para-Hilbert space $\mathcal{H}_k(M)$. The associated Berezin
reproducing kernel on the diagonal is expressed as $K_k(z, \bar{z})
= \sum_{m} |\psi_m(z)|^2$. To obtain its asymptotic behavior as $k
\to \infty$, we perform a localized stationary phase approximation
centered around the point $z_0$, corresponding to the classical
parameter state $\beta = J(z_0, \bar{z}_0)$.

We introduce the local fluctuation variable $\delta \beta = \beta -
\beta_0 = J(z, \bar{z}) - J(z_0, \bar{z}_0)$. The exact Taylor
expansion of the thermodynamic potential $\Phi(J(z))$ around the
local base state $\beta$ is given by:
\begin{equation}
\Phi(J(z)) = \Phi(\beta) + \sum_{i=1}^{3} \frac{\partial
\Phi}{\partial \beta_i} \delta \beta_i + \frac{1}{2} \sum_{i=1}^{3}
\sum_{j=1}^{3} \frac{\partial^2 \Phi}{\partial \beta_i \partial
\beta_j} \delta \beta_i \delta \beta_j + R_2(\beta, \delta \beta)
\end{equation}
By invoking the canonical Hessian property of the Koszul structure,
the first derivatives correspond to the components of the Koszul
1-form $d\Phi = \eta$, and the second derivatives define the
components of the Fisher-Souriau information metric matrix
$I(\beta)_{ij} = \frac{\partial^2 \Phi}{\partial \beta_i \partial
\beta_j}$.

Using the explicit expression of $\Phi(\beta)$, we compute the
precise entries of the Hessian matrix by direct differentiation. Let
$q = \beta^{2}_{1}-\beta^{2}_{2}-\beta^{2}_{3}$. The first-order
derivatives are:
\begin{equation}
\frac{\partial \Phi}{\partial \beta_1} = \frac{\beta_1}{\sqrt{q}} +
\frac{\beta_1}{q}, \quad \frac{\partial \Phi}{\partial \beta_2} =
-\frac{\beta_2}{\sqrt{q}} - \frac{\beta_2}{q}, \quad \frac{\partial
\Phi}{\partial \beta_3} = -\frac{\beta_3}{\sqrt{q}} -
\frac{\beta_3}{q}
\end{equation}
Differentiating a second time yields the exact components of the
Fisher-Souriau information metric tensor $I(\beta)$, splitting into
two structural terms:
\begin{align}
I(\beta)_{ij} &= \frac{1}{q^{\frac{3}{2}}} \begin{pmatrix} -\beta_2^2 - \beta_3^2 & \beta_1 \beta_2 & \beta_1 \beta_3 \\ \beta_1 \beta_2 & -\beta_1^2 + \beta_3^2 & -\beta_2 \beta_3 \\ \beta_1 \beta_3 & -\beta_2 \beta_3 & -\beta_1^2 + \beta_2^2 \end{pmatrix} \nonumber \\
&\quad + \frac{1}{q^2} \begin{pmatrix} -\beta_1^2 - \beta_2^2 -
\beta_3^2 & 2\beta_1 \beta_2 & 2\beta_1 \beta_3 \\ 2\beta_1 \beta_2
& -\beta_1^2 - \beta_2^2 + \beta_3^2 & -2\beta_2 \beta_3 \\ 2\beta_1
\beta_3 & -2\beta_2 \beta_3 & -\beta_1^2 + \beta_2^2 - \beta_3^2
\end{pmatrix}
\end{align}
The linear term in the expansion corresponds to the classical shift
vector which is canceled at the maximum localization point of the
coherent state projector. Thus, substituting the Taylor series into
the Bergman normalization integral over the Poincar\'{e} half-plane
geometry yields:
\begin{equation}
\int_{\mathbb{H}} e^{-k [\Phi(J(z)) - \Phi(\beta)]} \omega^n =
\int_{\mathbb{H}} \exp\left( - \frac{k}{2} \sum_{i=1}^{3}
\sum_{j=1}^{3} I(\beta)_{ij} \delta \beta_i \delta \beta_j - k
R_2(\beta, \delta \beta) \right) \omega^n
\end{equation}
As $k \to \infty$, the higher-order remainder $k R_2(\beta, \delta
\beta)$ is sub-leading and bounded. The integral is dominated by the
localized quadratic Gaussian fluctuation. Evaluating this
multi-dimensional Gaussian integral using the standard volume
transformation linked to the symplectic framework results in:
\begin{equation}
\int_{\mathbb{H}} e^{-k [\Phi(J(z)) - \Phi(\beta)]} \omega^n =
\left( \frac{2\pi}{k} \right)^n \left( \det I(\beta) \right)^{1/2}
\left(1 + \frac{C_1}{k}\right)
\end{equation}
where $C_1$ is a constant depending on the curvature tensor.
Normalizing the coherent state projector by inversion to determine
the diagonal value of the Bergman reproducing kernel leads to:
\begin{equation}
K_k(z, \bar{z}) = \left( \frac{k}{2\pi} \right)^n \left( \det
I(\beta) \right)^{-1/2} + \mathcal{O}(k^{n-1})
\end{equation}
By replacing $\det I(\beta)$ with the determinant of the explicit
matrix mapping of the derivatives of $-\ln(2\pi) + \sqrt{q} +
\frac{1}{2}\ln(q)$, we get:
\begin{equation}
K_k(z, \bar{z}) = \left( \frac{k}{2\pi} \right)^n \det \left(
\frac{\partial^2 \left[
\sqrt{\beta^{2}_{1}-\beta^{2}_{2}-\beta^{2}_{3}} +
\frac{1}{2}\ln(\beta^{2}_{1}-\beta^{2}_{2}-\beta^{2}_{3})
\right]}{\partial \beta_i \partial \beta_j} \right)^{-1/2} +
\mathcal{O}(k^{n-1})
\end{equation}

\end{proof}

\section{Kostant Geometric Quantization with Bi-Lagrangian Structures}\label{sec7}

\begin{theorem}\label{thm:kostant_bilagrangian}
Let $(M, \omega)$ be a general $2n$-dimensional symplectic manifold
admitting a bi-Lagrangian structure $TM = T\mathcal{F}_1 \oplus
T\mathcal{F}_2$, where $\mathcal{F}_1$ and $\mathcal{F}_2$ are
complementary Lagrangian foliations in the sense of Libermann. Let
$(L, \nabla, (\cdot,\cdot))$ be the prequantum line bundle over $M$
whose connection curvature satisfies $\mathrm{curv}(\nabla) =
-i\omega/\hbar$.
\begin{enumerate}
    \item The complementary Libermann foliations $\mathcal{F}_1$ and $\mathcal{F}_2$ induce a canonical pair of real polarisations $\mathcal{P}_1 = T\mathcal{F}_1 \otimes \mathbb{C}$ and $\mathcal{P}_2 = T\mathcal{F}_2 \otimes \mathbb{C}$. The corresponding spaces of Kostant quantum states $\mathcal{H}_{\mathcal{P}_1}$ and $\mathcal{H}_{\mathcal{P}_2}$ consist of sections $s \in \Gamma(L)$ satisfying:
    \begin{equation}
    \nabla_X s = 0, \quad \forall X \in \Gamma(\mathcal{P}_i) \quad (i=1,2)
    \end{equation}
    \item In the presence of a thermodynamic shift described by the Koszul 1-form $\beta$, the modified prequantum connection $\tilde{\nabla} = \nabla + \frac{i}{\hbar}\beta$ preserves the flatness along the individual Lagrangian leaves if and only if $\beta$ restricts to a closed form on each leaf ($\mathrm{d}(\beta\vert_{\mathcal{F}_i}) = 0$), defining a dissipative pairing between the dual wavefunctions in $\mathcal{H}_{\mathcal{P}_1}$ and $\mathcal{H}_{\mathcal{P}_2}$ via a Blattner-Kostant-Sternberg (BKS) kernel adapted to the K\"{u}nneth structure.
\end{enumerate}
\end{theorem}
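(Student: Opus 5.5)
The argument has two parts: first build the two real polarizations, then compute the curvature of the modified connection on each leaf.

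\textbf{Part 1: the polarizations.} Each $T\mathcal{F}_i$ is the tangent bundle of a foliation, so by Frobenius it is involutive. Being Lagrangian, it has rank $n$ and $\omega\vert_{T\mathcal{F}_i}=0$. Complexifying, $\mathcal{P}_i=T\mathcal{F}_i\otimes\mathbb{C}$ is an involutive, Lagrangian subbundle of $TM\otimes\mathbb{C}$ with $\mathcal{P}_i=\overline{\mathcal{P}_i}$. These are exactly the axioms of a real polarization. Complementarity $TM=T\mathcal{F}_1\oplus T\mathcal{F}_2$ gives $\mathcal{P}_1\cap\overline{\mathcal{P}_2}=0$, so the pair is transverse, as the BKS construction needs.

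The spaces $\mathcal{H}_{\mathcal{P}_i}$ are then defined as the polarized sections $\nabla_X s=0$ for $X\in\Gamma(\mathcal{P}_i)$. This definition is consistent because the curvature satisfies
\begin{equation*}
[\nabla_X,\nabla_Y]-\nabla_{[X,Y]}=-\tfrac{i}{\hbar}\,\omega(X,Y).
\end{equation*}
The right-hand side vanishes on $\mathcal{P}_i$, and $[X,Y]\in\Gamma(\mathcal{P}_i)$ by involutivity. Hence $\nabla$ restricted to each leaf is a flat partial connection, and its local parallel sections exist by the leafwise Frobenius theorem. Concretely, I will work in Libermann--Weinstein Darboux charts $(q,p)$ adapted to both foliations, in which the leaves of $\mathcal{F}_1$ are $q=\mathrm{const}$ and the leaves of $\mathcal{F}_2$ are $p=\mathrm{const}$. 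In such a chart, choosing the gauge $\theta=p\,dq$ (so $d\theta=\omega$) shows that polarized sections are functions of the transverse variables only.

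\textbf{Part 2: the modified connection.} For $\tilde\nabla=\nabla+\frac{i}{\hbar}\beta$ I compute the curvature directly:
\begin{equation*}
\mathrm{curv}(\tilde\nabla)=-\tfrac{i}{\hbar}\,\omega+\tfrac{i}{\hbar}\,d\beta .
\end{equation*}
Restrict this to a leaf $\iota_i:\mathcal{F}_i\hookrightarrow M$. The $\omega$ term drops out because the leaf is Lagrangian, and $\iota_i^*d\beta=d(\beta\vert_{\mathcal{F}_i})$. So the leafwise curvature is $\frac{i}{\hbar}d(\beta\vert_{\mathcal{F}_i})$, which vanishes if and only if $\beta$ is leafwise closed. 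This gives both directions of the equivalence at once.

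When the condition holds, the two modified polarized spaces $\tilde{\mathcal{H}}_{\mathcal{P}_1}$ and $\tilde{\mathcal{H}}_{\mathcal{P}_2}$ are well defined. Locally, on a simply connected leaf, a leafwise primitive $\beta\vert_{\mathcal{F}_i}=df_i$ gives $\tilde s=e^{-if_i/\hbar}s$ with $s$ polarized for $\nabla$.

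\textbf{The BKS pairing.} I then define the pairing by
\begin{equation*}
\langle s_1,s_2\rangle=\int_M (s_1,s_2)\,\mu_{12},
\end{equation*}
where $\mu_{12}$ is built from half-forms. Transversality gives $\mathcal{P}_1\oplus\mathcal{P}_2=TM^{\mathbb{C}}$, so the product of half-densities on $\mathcal{P}_1$ and $\mathcal{P}_2$ is canonically a density on $M$, identified with the Liouville measure through $\omega^n/n!$. This is the Künneth-adapted kernel. In the Darboux chart the kernel picks up the factor $\exp\bigl(\tfrac{i}{\hbar}(f_2-f_1)\bigr)$, which I interpret as the integral of $\beta$ along paths that move first along an $\mathcal{F}_1$ leaf and then along an $\mathcal{F}_2$ leaf. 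Leafwise closedness is what makes this path integral independent of the path within each leaf.

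\textbf{Main obstacle.} Two points are delicate. The first is global well-definedness of the phase $e^{i\int\beta/\hbar}$ when leaves are not simply connected. There one needs the leafwise de Rham class of $\beta$ (its holonomy along leaves) to be trivial, or to satisfy an integrality condition, so I would state the result locally or under that assumption. The second is convergence of the BKS integral for noncompact leaves. I would treat the pairing formally, as is standard, or restrict to sections that are compactly supported transversally.
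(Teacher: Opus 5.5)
Your argument for the two substantive claims is the paper's. You check that $T\mathcal{F}_i\otimes\mathbb{C}$ is involutive, Lagrangian and real. You then restrict $\mathrm{curv}(\tilde\nabla)=-\frac{i}{\hbar}\omega+\frac{i}{\hbar}d\beta$ to a leaf, where the $\omega$-term dies, so leafwise flatness is equivalent to $d(\beta|_{\mathcal{F}_i})=0$. You also add the useful observation that $\nabla$ is already leafwise flat, so polarized sections exist locally.

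The routes differ only in the BKS part, which the statement leaves informal. The paper simply posits the weight $\exp\bigl(-\frac{1}{\hbar}\int_{\gamma_z}\beta\bigr)\,\omega^n$ with an unspecified ``dissipative path'' $\gamma_z$. You instead build the kernel from half-forms and derive the $\beta$-dependence from leafwise primitives, $\tilde s_i=e^{-if_i/\hbar}s_i$. This gives the factor $e^{i(f_2-f_1)/\hbar}$ and an explicit L-shaped path, whose independence of the path within each leaf is exactly leafwise closedness. Your version is what the stated connection $\nabla+\frac{i}{\hbar}\beta$ actually produces, and you rightly flag leafwise holonomy and convergence. The paper's real exponential fits the word ``dissipative'', but it would come from the non-unitary (not metric-compatible) shift $\nabla+\frac{1}{\hbar}\beta$, not from the connection in the statement. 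Since your factor is unimodular, you should say in what sense the resulting pairing is ``dissipative''.

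One auxiliary claim in your proposal is false: a bi-Lagrangian manifold does not in general admit Darboux charts adapted to both foliations. Take $\mathbb{R}^2$ with $\omega=(1+q^2p^2)\,dq\wedge dp$ and the coordinate foliations. An adapted chart would need $Q=Q(q)$ and $P=P(p)$, forcing $1+q^2p^2=Q'(q)P'(p)$. That is impossible near any point with $qp\neq0$, since there $\partial_q\partial_p\log(1+q^2p^2)=4qp/(1+q^2p^2)^2\neq0$. In general the obstruction is the curvature of the canonical bi-Lagrangian connection. What is available is Weinstein's Darboux chart adapted to one Lagrangian foliation at a time, or product coordinates adapted to both, in which $\omega=\sum_{i,j}\omega_{ij}(q,p)\,dq^i\wedge dp^j$.

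This does not break your proof, because you use the doubly adapted chart only as illustration. In Part 1 a one-foliation chart suffices. In the BKS computation the phase follows from $\tilde s_i=e^{-if_i/\hbar}s_i$ with no chart at all. Two smaller slips are worth fixing:
\begin{enumerate}
\item Even in a doubly adapted chart, the single gauge $\theta=p\,dq$ makes only the $\mathcal{F}_1$-polarized sections functions of the transverse variable. The $\mathcal{F}_2$-polarized ones are $e^{ipq/\hbar}g(p)$, and you need $\theta'=-q\,dp$ for those.
\item The product of the two half-forms is a half-density on $M$, not a density. It becomes a density only after pairing with $\sqrt{\omega^n/n!}$.
\end{enumerate}
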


\begin{proof}
1. By definition, a polarization in Kostant's geometric quantization
is an involutive Lagrangian subbundle of the complexified tangent
bundle $TM \otimes \mathbb{C}$. Since $\mathcal{F}_1$ and
$\mathcal{F}_2$ are regular Libermann foliations, their tangent
distributions $T\mathcal{F}_1$ and $T\mathcal{F}_2$ are integrable
($\left[\Gamma(T\mathcal{F}_i), \Gamma(T\mathcal{F}_i)\right]
\subset \Gamma(T\mathcal{F}_i)$) and isotropic of maximal dimension
($\omega(X,Y) = 0$ for all $X,Y \in T\mathcal{F}_i$). Thus,
$\mathcal{P}_1 = T\mathcal{F}_1 \otimes \mathbb{C}$ and
$\mathcal{P}_2 = T\mathcal{F}_2 \otimes \mathbb{C}$ form two
distinct, complementary real polarizations.

The existence of the prequantum line bundle $L$ is guaranteed by the
integrality condition of $[\omega/2\pi\hbar] \in H^2(M,
\mathbb{Z})$. The covariant constancy condition $\nabla_X s = 0$ for
$X \in \Gamma(\mathcal{P}_i)$ defines the space of Kostant quantum
states. For $\mathcal{P}_1$, the sections are constant along the
leaves of $\mathcal{F}_1$ and can be locally identified with
functions depending only on the transverse coordinates
parameterizing $\mathcal{F}_2$. This provides a concrete
representation of the quantum state space matching the split
K\"{u}nneth topology.\newline

2. We now examine the modification of the connection by the Koszul
1-form $\beta$, setting $\tilde{\nabla}_X = \nabla_X +
\frac{i}{\hbar}\beta(X)$. The curvature of this perturbed connection
evaluated on two vector fields $X, Y \in \Gamma(T\mathcal{F}_i)$ is
given by:
\begin{equation}
\mathrm{curv}(\tilde{\nabla})(X, Y) = \mathrm{curv}(\nabla)(X, Y) +
\frac{i}{\hbar} \mathrm{d}\beta(X, Y)
\end{equation}
Since $T\mathcal{F}_i$ is Lagrangian, $\mathrm{curv}(\nabla)(X, Y) =
-\frac{i}{\hbar}\omega(X,Y) = 0$. Therefore, the new connection
$\tilde{\nabla}$ remains flat along the leaves of the Libermann
foliation if and only if $\mathrm{d}\beta(X,Y) = 0$ for all $X,Y \in
T\mathcal{F}_i$, meaning the restriction of the Koszul form to the
leaves is closed ($\mathrm{d}(\beta\vert_{\mathcal{F}_i}) = 0$).

To establish the pairing between the two polarized spaces
$\mathcal{H}_{\mathcal{P}_1}$ and $\mathcal{H}_{\mathcal{P}_2}$, we
deploy the Blattner-Kostant-Sternberg (BKS) pairing. Let $s_1 \in
\mathcal{H}_{\mathcal{P}_1}$ and $s_2 \in
\mathcal{H}_{\mathcal{P}_2}$. Because $TM = T\mathcal{F}_1 \oplus
T\mathcal{F}_2$, the pointwise Hermitian inner product $(s_1, s_2)$
defines a density on $M$ governed by the volume form. The integrated
BKS kernel is modified by the Koszul flow through the factor:
\begin{equation}
\langle s_1, s_2 \rangle_{\mathrm{BKS}} = \int_M (s_1(z), s_2(z))
\cdot \exp\left(-\frac{1}{\hbar}\int_{\gamma_z} \beta\right)
\omega^n
\end{equation}
where $\gamma_z$ represents the dissipative path across the
transversal Libermann leaves. This proves that the Koszul form acts
as a non-trivial quantum transition weight, dynamically shifting the
Kostant wavefunctions across the dual real polarizations.
\end{proof}

\section{Conclusion}\label{sec8}

This paper has formalized a unified geometric architecture for
non-equilibrium dissipation structures by bridging Jean Marie
Souriau's Lie group thermodynamics, Paulette Libermann's
symplectically complete foliations, and Jean Louis Koszul's Hessian
metric infrastructure. By framing the system on the Poincar\'{e}
half plane $\mathbb{H}$ under the strongly Hamiltonian action of
$SL(2, \mathbb{R})$, we demonstrated that the level sets of the
equivariant momentum map form a regular Libermann foliation
representing the conservative boundaries of the system, governed by
the quadratic Casimir $H(z,\bar{z}) = -2$. The non equilibrium
dissipative vector field derived from the Koszul potential acts
strictly transverse to these leaves, driving a metriplectic flow
that breaks Noether conservation laws and generates entropy.
Quantumwise, this dual structural interaction extends naturally to
the microscopic regime. In Berezin quantization, the Koszul
potential induces a para-Hilbert space where the asymptotic quantum
density of states is explicitly mediated by the inverse square root
of the Fisher-Souriau information metric determinant. In Kostant's
geometric quantization, the complementary Libermann foliations
provide a pair of real dual polarizations, allowing the Koszul
1-form to act as a prequantum connection perturbation that
dynamically shifts wavefunctions across the
Blattner-Kostant-Sternberg (BKS) kernel. Ultimately, these results
show that both classical irreversible dynamics and quantum state
densities are fundamentally encoded by the Hessian geometry of the
stable open thermodynamic cone $\Omega$.

\backmatter

\bmhead{Supplementary information} This manuscript has no additional
data.

\bmhead{Acknowledgments} We thank all the members of the Algebra,
Geometry and Applications Laboratory  of the University of Yaounde1
for their suggestions in the work. We thank Professor  Thomas
Bouetou Bouetou of the Polytechnic School of Yaounde1.

\section*{Declarations}
This article has no conflict of interest to the journal. No
financing with a third party.
\begin{itemize}
\item No Funding
\item No Conflict of interest/Competing interests (check journal-specific guidelines for which heading to use)
\item  Ethics approval
\item  Consent to participate
\item  Consent for publication
\item  Availability of data and materials
\item  Code availability
\item Authors' contributions
\end{itemize}
%
%\noindent
%%%If any of the sections are not relevant to your manuscript, please include the heading and write `Not applicable' for that section.
%
%%%===================================================%%
%%% For presentation purpose, we have included        %%
%%% \bigskip command. please ignore this.             %%
%%%===================================================%%
%%%\bigskip
%%%\begin{flushleft}%
%%%Editorial Policies for:
%
%%%\bigskip\noindent
%%%%%Springer journals and proceedings: \url{https://www.springer.com/gp/editorial-policies}
%
%\bigskip\noindent
%%%Nature Portfolio journals: \url{https://www.nature.com/nature-research/editorial-policies}
%
%%%\bigskip\noindent
%%%\textit{Scientific Reports}: \url{https://www.nature.com/srep/journal-policies/editorial-policies}
%
%%%\bigskip\noindent
%%%BMC journals: \url{https://www.biomedcentral.com/getpublished/editorial-policies}
%%%\end{flushleft}
%
\begin{appendices}

\section{Information metric}\label{secA1}
 The exact coordinate components $Q =
Q_1\alpha_1^* + Q_2\alpha_2^* + Q_3\alpha_3^*$ satisfy:

\begin{eqnarray*}
% \nonumber to remove numbering (before each equation)
  Q_1 &=& \frac{\beta_1}{\sqrt{\beta^{2}_{1}-\beta^{2}_{2}-\beta^{2}_{3}}} +
  \frac{\beta_1}{\beta^{2}_{1}-\beta^{2}_{2}-\beta^{2}_{3}}, \quad
    Q_2 = -\frac{\beta_2}{\sqrt{\beta^{2}_{1}-\beta^{2}_{2}-\beta^{2}_{3}}}
    - \frac{\beta_2}{\beta^{2}_{1}-\beta^{2}_{2}-\beta^{2}_{3}} \\
Q_3 &=&
-\frac{\beta_3}{\sqrt{\beta^{2}_{1}-\beta^{2}_{2}-\beta^{2}_{3}}} -
\frac{\beta_3}{\beta^{2}_{1}-\beta^{2}_{2}-\beta^{2}_{3}}
\end{eqnarray*}

\section{Information metric}\label{secA2}
Yielding the following exact coordinate expressions:
\begin{align}
    \beta_1 &= \frac{4 Q_1}{\sqrt{1 + 8\sqrt{Q_1^2 - Q_2^2 - Q_3^2}} \left(1 + \sqrt{1 + 8\sqrt{Q_1^2 - Q_2^2 - Q_3^2}}\right)} \\
    \beta_2 &= \frac{-4 Q_2}{\sqrt{1 + 8\sqrt{Q_1^2 - Q_2^2 - Q_3^2}} \left(1 + \sqrt{1 + 8\sqrt{Q_1^2 - Q_2^2 - Q_3^2}}\right)} \\
    \beta_3 &= \frac{-4 Q_3}{\sqrt{1 + 8\sqrt{Q_1^2 - Q_2^2 - Q_3^2}} \left(1 + \sqrt{1 + 8\sqrt{Q_1^2 - Q_2^2 - Q_3^2}}\right)}
\end{align}
The associated Fisher-Souriau information metric $I(\beta) =
\frac{\partial^2 \Phi}{\partial \beta_i \partial \beta_j}$ splits
into the following exact components:
\begin{align}
I(\beta) &=
\frac{1}{\left(\beta^{2}_{1}-\beta^{2}_{2}-\beta^{2}_{3}\right)^{\frac{3}{2}}}\left(
  \begin{array}{ccc}
    -\beta^{2}_{2}-\beta^{2}_{3} & \beta_{1}\beta_{2} & \beta_{1}\beta_{3} \\
    \beta_{1}\beta_{2} & -\beta^{2}_{1}+\beta^{2}_{3} & -\beta_{2}\beta_{3} \\
    \beta_{1}\beta_{3} & -\beta_{2}\beta_{3} & -\beta^{2}_{1}+\beta^{2}_{2}
  \end{array}
\right) \nonumber \\
&\quad +
\frac{1}{\left(\beta^{2}_{1}-\beta^{2}_{2}-\beta^{2}_{3}\right)^{2}}\left(
  \begin{array}{ccc}
    -\beta^{2}_{1}-\beta^{2}_{2}-\beta^{2}_{3} & 2\beta_{1}\beta_{2} & 2\beta_{1}\beta_{3} \\
    2\beta_{1}\beta_{2} & -\beta^{2}_{1}-\beta^{2}_{2}+\beta^{2}_{3} & -2\beta_{2}\beta_{3} \\
    2\beta_{1}\beta_{3} & -2\beta_{2}\beta_{3} & -\beta^{2}_{1}+\beta^{2}_{2}-\beta^{2}_{3}
  \end{array}
\right)
\end{align}
%%%An appendix contains supplementary information that is not an essential part of the text itself but which may be helpful in providing a more comprehensive understanding of the research problem or it is information that is too cumbersome to be included in the body of the paper.
%
%%%=============================================%%
%%% For submissions to Nature Portfolio Journals %%
%%% please use the heading ``Extended Data''.   %%
%%%=============================================%%
%
%%%=============================================================%%
%%% Sample for another appendix section                 %%
%%%=============================================================%%
%
%%% \section{Example of another appendix section}\label{secA2}%
%%% Appendices may be used for helpful, supporting or essential material that would otherwise
%%% clutter, break up or be distracting to the text. Appendices can consist of sections, figures,
%%% tables and equations etc.
%
\end{appendices}
%
%%%===========================================================================================%%
%%% If you are submitting to one of the Nature Portfolio journals, using the eJP submission   %%
%%% system, please include the references within the manuscript file itself. You may do this  %%
%%% by copying the reference list from your .bbl file, paste it into the main manuscript .tex %%
%%% file, and delete the associated \verb+\bibliography+ commands.                            %%
%%%===========================================================================================%%
%
\bibliography{bibliography}% common bib file
%%% if required, the content of .bbl file can be included here once bbl is generated
%%%\input sn-article.bbl

\end{document}